\theoremstyle{definition}
\newtheorem{theorem}{Theorem}[section]
\newtheorem{definition}{Definition}[section]
\newtheorem{corollary}{Corollary}[section]
\newtheorem{proposition}{Proposition}[section]
\newtheorem{example}{Example}[section]
\newtheorem{remark}{Remark}[section]
\numberwithin{equation}{section}
\begin{document}

\title{On a new generalization of metric spaces}

\author{Mohamed Jleli, Bessem Samet}

\keywords{$\mathcal{F}$-metric space; topological properties; Banach contraction principle}

\subjclass[2010]{54E50; 54A20; 47H10}

\maketitle

\begin{abstract}
In this paper, we introduce the $\mathcal{F}$-metric space concept, which generalizes the metric space notion. We define a natural topology $\tau_{\mathcal{F}}$ in such spaces and we study their topological properties. Moreover, we establish a new version of the Banach contraction principle in the setting of $\mathcal{F}$-metric spaces. Several examples are presented to illustrate our study.
\end{abstract}

\section{Introduction}\label{sec1}

A metric on a nonempty set $X$ is a mapping $d: X\times X\to [0,+\infty)$ satisfying the following properties:
\begin{itemize}
\item[(i)] $d(x,y)=0$ if and only if $x=y$.
\item[(ii)] $d(x,y)=d(y,x)$.
\item[(iii)] $d(x,y)\leq d(x,z)+d(z,y)$.
\end{itemize}
If $d$ is a metric on $X$, then the pair $(X,d)$ is said to be a metric space.   The theory of metric spaces is the general theory which underlies several branches of mathematical analysis, as real analysis, complex analysis, multidimensional calculus, etc.

In recent years, many interesting generalizations (or extensions)  of the metric space
concept appeared.  Czerwik \cite{C} introduced the notion of a $b$-metric.  Khamsi and Hussain  \cite{KH} reintroduced this notion  under the name metric-type.  In \cite{FA},  Fagin et al.  introduced the notion of $s$-relaxed$_p$ metric. Note that any $s$-relaxed$_p$ metric is a $b$-metric, but the converse is not true in general (see \cite{KS}). 
G\"ahler \cite{GA} introduced the notion of a 2-metric, which is a mapping defined on the product set $X\times X\times X$, and satisfying certain conditions. G\"ahler claimed that a 2-metric is a generalization of the usual notion of a metric. 
However, different authors showed that no relations between these two concepts exist (see, for example \cite{H}). A more appropriate notion of generalized metric space was introduced by Mustafa and Sims \cite{MS} under the name G-metric space.
In \cite{B}, Branciari suggested a new generalization of the metric notion by replacing the triangle inequality (iii) by a more general one involving four points. Matthews \cite{M} introduced the notion of a partial metric as a part of the study of denotational semantics of dataflow networks. Recently, we introduced \cite{JS} the concept of JS-metric, where the triangle inequality is replaced by a $\limsup$-condition. For more details about the above cited concepts
and other generalizations of the metric notion, we refer the reader to the nice book \cite{KS} by  Kirk and Shahzad.

In this paper, we introduce a new generalization of the metric space notion, which we call an $\mathcal{F}$-metric space. We compare our concept with existing generalizations from the literature. Next, we define a natural topology $\tau_{\mathcal{F}}$ on these spaces, and we study their  topological properties. Moreover, a new version of the Banach contraction principle is established in the setting of $\mathcal{F}$-metric spaces.

The paper is organized as follows. In Section \ref{sec2}, the concept of $\mathcal{F}$-metric spaces is introduced.  We show that any metric space is an $\mathcal{F}$-metric space but the converse is not true in general, which confirms that our concept is more general than the standard metric concept.  Moreover, we compare our proposed notion with previous generalizations of metric spaces. More precisely, we show that any $s$-relaxed$_p$-metric space is an $\mathcal{F}$-metric space (see Example \ref{JL}). Further, we provide an example of an  $\mathcal{F}$-metric space that cannot be an 
$s$-relaxed$_p$-metric space (see Example \ref{EXJ}), which confirms that the class of  $\mathcal{F}$-metric spaces is more large than the class of $s$-relaxed$_p$-metric spaces. A comparison with b-metric spaces is also considered. We show that there exist $\mathcal{F}$-metric spaces that are not b-metric spaces (see Example \ref{JL}) and there exist b-metric spaces that are not $\mathcal{F}$-metric spaces (see Proposition \ref{T0}). In Section \ref{sec3}, we introduce the notion of $\mathcal{F}$-metric boundedness, which is used to provide a characterization of $\mathcal{F}$-metrics (see  Theorem \ref{T1}). In Section \ref{sec4}, a topology $\tau_{\mathcal{F}}$ is introduced on $\mathcal{F}$-metric spaces using the concept of balls. It is well-known that in standard metric spaces, the closed ball is closed with respect to the topology defined via balls (or equivalently the sequential topology). In our situation, we do not know whether closed balls are closed with respect to $\tau_{\mathcal{F}}$. However, we provide a sufficient condition so that any closed ball  is closed with respect to $\tau_{\mathcal{F}}$ (see Proposition \ref{CL}).   Additional topological properties are also discussed, as compactness, completeness, etc. In Section \ref{sec5}, we establish the Banach contraction principle in the setting of $\mathcal{F}$-metric spaces (see Theorem \ref{TB}).

\section{A generalized metric space}\label{sec2}

Let $\mathcal{F}$ be the set of functions $f:(0,+\infty)\to \mathbb{R}$   satisfying the following conditions:
\begin{itemize}
\item[($\mathcal{F}_1$)] $f$ is non-decreasing, i.e.,
$0<s<t\implies f(s)\leq f(t)$.
 \item[($\mathcal{F}_2$)] For every sequence $\{t_n\}\subset (0,+\infty)$, we have
 $$
 \lim_{n\to +\infty} t_n=0 \Longleftrightarrow \lim_{n\to +\infty}f(t_n)=-\infty.
 $$ 
\end{itemize}

We  generalize the concept of metric spaces as follows.

\begin{definition}
Let $X$ be a nonempty set, and let $D: X\times X\to [0,+\infty)$ be a given mapping. Suppose that there exists $(f,\alpha)\in \mathcal{F}\times [0,+\infty)$ such that 
\begin{itemize}
\item[(D1)] $(x,y)\in X\times X, \, D(x,y)=0\Longleftrightarrow x=y$.
\item[(D2)] $D(x,y)=D(y,x)$, for all $(x,y)\in X\times X$.
\item[(D3)] For every $(x,y)\in X\times X$, for every $N\in \mathbb{N}$, $N\geq 2$, and for every $\displaystyle (u_i)_{i=1}^N\subset X$ with $(u_1,u_N)=(x,y)$, we have
$$
D(x,y)>0\implies f(D(x,y))\leq f \left(\sum_{i=1}^{N-1} D(u_i,u_{i+1})\right)+\alpha.
$$
\end{itemize}
Then $D$ is said to be an $\mathcal{F}$-metric on $X$, and the pair $(X,D)$ is said to be an $\mathcal{F}$-metric space.
\end{definition}

Observe that any  metric on $X$ is an $\mathcal{F}$-metric on $X$. Indeed, if $d$ is a metric on $X$, then it satisfies (D1) and (D2). On the other hand, by the triangle inequality, for every $(x,y)\in X\times X$, for every $N\in \mathbb{N}$, $N\geq 2$, and for every $\displaystyle (u_i)_{i=1}^N\subset X$ with $(u_1,u_N)=(x,y)$, we have
$$
d(x,y)\leq \sum_{i=1}^{N-1} d(u_i,u_{i+1}),
$$
which yields, 
$$
d(x,y)>0 \implies \ln\left(d(x,y)\right)\leq \ln\left(\sum_{i=1}^{N-1} d(u_i,u_{i+1})\right).
$$
Then $d$ satisfies  (D3) with  $f(t)=\ln t$, $t>0$, and $\alpha=0$.

In the following, some examples of $\mathcal{F}$-metric spaces which are not metric spaces are presented.

\begin{example}\label{exr}
Let $X=\mathbb{N}$, and let $D: X\times X\to [0,+\infty)$ be the mapping defined by
\begin{eqnarray}\label{DD}
D(x,y)=\left\{\begin{array}{lll}
(x-y)^2, &\mbox{ if }& (x,y)\in [0,3]\times [0,3],\\ \\
|x-y|, &\mbox{ if }& (x,y)\not\in [0,3]\times [0,3],
\end{array}
\right.
\end{eqnarray}
for all $(x,y)\in X\times X$. It can be easily seen that $D$ satisfies (D1) and (D2). However, $D$ doesn't satisfy the triangle inequality, since
$$
d(1,3)=4>1+1=d(1,2)+d(2,3).
$$
Hence, $D$ is not a metric on $X$. Further, let us fix a certain $(x,y)\in X\times X$ such that $D(x,y)>0$. Let $\displaystyle (u_i)_{i=1}^N\subset X$, where $N\in \mathbb{N}$, $N\geq 2$, and $(u_1,u_N)=(x,y)$. Let
$$
I=\{i=1,2,\cdots,N-1:\, (u_i,u_{i+1})\in [0,3]\times [0,3]\}
$$
and
$$
J=\{1,2,\cdots,N-1\}\backslash I.
$$
Therefore, we have
\begin{eqnarray*}
\sum_{i=1}^{N-1} D(u_i,u_{i+1})&=&
\sum_{i\in I} D(u_i,u_{i+1})+\sum_{j\in J} D(u_j,u_{j+1})\\
&= & \sum_{i\in I} (u_{i+1}-u_{i})^2 +\sum_{j\in J} |u_{j+1}-u_j|.
\end{eqnarray*}
Next, we discuss two possible cases.\\
Case 1: If $(x,y)\not\in [0,3]\times [0,3]$. In this case, we have
\begin{eqnarray*}
D(x,y) &=& |x-y|\\
&\leq &\sum_{i=1}^{N-1} |u_{i+1}-u_{i}|\\
&= &\sum_{i\in I} |u_{i+1}-u_{i}|+\sum_{j\in I} |u_{j+1}-u_{j}|.
\end{eqnarray*}
On the other hand, observe that
$$
|u_{i+1}-u_{i}|\leq (u_{i+1}-u_{i})^2,\quad i\in I.
$$
Therefore, we deduce that 
\begin{eqnarray*}
D(x,y) &\leq & \sum_{i\in I} (u_{i+1}-u_{i})^2 +\sum_{j\in J} |u_{j+1}-u_j|\\
&=& \sum_{i=1}^{N-1} D(u_i,u_{i+1}).
\end{eqnarray*}
Case 2: If $(x,y)\in [0,3]\times [0,3]$.  In this case, we have
\begin{eqnarray*}
D(x,y)&=&|x-y|^2\\
&\leq &  3|x-y|\\
&\leq & 3 \left(\sum_{i\in I} |u_{i+1}-u_{i}|+\sum_{j\in J} |u_{j+1}-u_{j}|\right)\\
&\leq & 3\left(\sum_{i\in I} |u_{i+1}-u_{i}|^2+\sum_{i\in J} |u_{j+1}-u_{j}|\right)\\
&=&3  \sum_{i=1}^{N-1} D(u_i,u_{i+1}).
\end{eqnarray*}
Next, combining the above cases, we deduce that for every $(x,y)\in X\times X$, for every $N\in \mathbb{N}$, $N\geq 2$, and for every $\displaystyle (u_i)_{i=1}^N\subset X$ with $(u_1,u_N)=(x,y)$, we have
\begin{equation}\label{RC}
D(x,y)>0 \implies D(x,y)\leq 3  \sum_{i=1}^{N-1} D(u_i,u_{i+1}),
\end{equation}
which yields
$$
D(x,y)>0 \implies  \ln(D(x,y))\leq \ln\left(\sum_{i=1}^{N-1} D(u_i,u_{i+1})\right)+\ln 3.
$$
This proves that $D$ satisfies (D3) with $f(t)=\ln t$, $t>0$, and $\alpha=\ln 3$. Then  $D$ is an $\mathcal{F}$-metric on $X$. 
\end{example}

\begin{example}[The class of $s$-relaxed$_p$ metrics] \label{JL}
Let $d: X\times X\to [0,+\infty)$ be an $s$-relaxed$_p$ metric on $X$ (see \cite{FA}), i.e., $d$ satisfies (D1), (D2), and 
\begin{itemize}
\item[(S)] There exists $K\geq 1$ such that for every $(x,y)\in X\times X$, for every $N\in \mathbb{N}$, $N\geq 2$, and for every $\displaystyle (u_i)_{i=1}^N\subset X$ with $(u_1,u_N)=(x,y)$, we have
$$
d(x,y)\leq K \sum_{i=1}^{N-1} d(u_i,u_{i+1}).
$$
\end{itemize}
Then $d$ satisfies (D3) with $f(t)=\ln t$, $t>0$, and $\alpha=\ln K$. As consequence, any $s$-relaxed$_p$ metric on $X$ is an $\mathcal{F}$-metric on $X$.
\end{example}

\begin{remark}
Note that from \eqref{RC},  the mapping $D$ defined by \eqref{DD}  is an  $s$-relaxed$_p$ metric on $X$ with $K=3$.
\end{remark}

\begin{example}[The class of bounded 2-metric spaces]
Let $\sigma: X\times X\times X\to [0,+\infty)$ be a mapping satisfying the following conditions:
\begin{itemize}
\item[($\sigma_1$)] $(a,b)\in X\times X,\, a\neq b \implies \exists\,c\in X:\, \sigma(a,b,c)\neq 0$.
\item[($\sigma_2$)] For all $(a,b,c)\in X\times X\times X$, 
$\sigma(a,b,c)=0$ if and only if at least two elements from $\{a,b,c\}$ are equal.
\item[($\sigma_3$)] $(a,b,c)\in X\times X\times X\implies 
\sigma(a,b,c)=\sigma(u,v,w)$, where $\{u,v,w\}$ is any permutation of $\{a,b,c\}$.
\item[($\sigma_4$)] For all $(a,b,c)\in X\times X\times X$, we have
$$
\sigma(a,b,c)\leq \sigma(a,b,d)+\sigma(b,c,d)+\sigma(c,a,d).
$$
\end{itemize}
Then $\sigma$ is called a 2-metric on $X$, and $(X,\sigma)$ is called a 2-metric space (see \cite{GA}). Moreover, suppose that $\displaystyle\sup_{x,y,z\in X} \sigma(x,y,z)<+\infty$. In this case,  $(X,\sigma)$ is said to be a bounded 2-metric space. Define the mapping $D_\sigma: X\times X\to [0,+\infty)$ by
$$
D_\sigma(x,y)=\sup_{a\in X}\sigma(a,x,y),\quad (x,y)\in X\times X.
$$
It was proved in \cite{AN} that $D_\sigma$ is an s-relaxed$_p$ metric on $X$ with $K=2$. Therefore, $D_\sigma$ is an $\mathcal{F}$-metric on $X$.
\end{example}

The next example shows that the class of $\mathcal{F}$-metrics is more large than the class of $s$-relaxed$_p$ metrics.

\begin{example}\label{EXJ}
Let $X=\mathbb{N}$, and let $D: X\times X\to [0,+\infty)$ be the mapping defined by
\begin{eqnarray}\label{DDD}
D(x,y)=\left\{\begin{array}{lll}
\exp\left(|x-y|\right), &\mbox{ if }& x\neq y,\\ \\
0, &\mbox{ if }& x=y,
\end{array}
\right.
\end{eqnarray}
for all $(x,y)\in X\times X$. It can be easily seen that $D$ satisfies (D1) and (D2).

First, let us prove that $D$ cannot be  an $s$-relaxed$_p$ metric. We argue by contradiction, by supposing that $D$ satisfies the condition (S) of Example \ref{JL} with a certain $K\geq 1$. Therefore, we have
$$
D(2n,0)\leq K\left(D(2n,n)+D(n,0)\right),\quad n\in \mathbb{N}^*,
$$
that is,
$$
\exp(n)\leq 2K,\quad n\in \mathbb{N}^*.
$$
Passing to the limit as $n\to +\infty$, we obtain a contradiction. Therefore, $D$ is not an $s$-relaxed$_p$ metric.

 Next, we shall prove that $D$ belongs to the class of $\mathcal{F}$-metrics.  Let 
$$
f(t)=\frac{-1}{t},\quad t>0.
$$
It can be easily seen that $f\in \mathcal{F}$.   In order to check (D3), let us fix $(x,y)\in X\times X$ with $D(x,y)>0$. For every $N\in \mathbb{N}$, $N\geq 2$, and for every $\displaystyle (u_i)_{i=1}^N\subset X$ with $(u_1,u_N)=(x,y)$, we have
\begin{align*}
&1+f\left(\sum_{i=1}^{N-1} D(u_i,u_{i+1})\right)
-f(D(x,y))\\
&=1-\frac{1}{\displaystyle\sum_{i=1:N-1,\,u_{i+1}\neq u_i }\exp\left(|u_{i+1}-u_i|\right)}+\frac{1}{\exp\left(|x-y|\right)}\\
&\geq 1-1+ \frac{1}{\exp\left(|x-y|\right)}\\
&\geq 0.
\end{align*}
Therefore, we have
$$
f(D(x,y))\leq f\left(\sum_{i=1}^{N-1} D(u_i,u_{i+1})\right)+1.
$$
This proves that  $D$ satisfies (D3) with $f(t)=\frac{-1}{t}$, $t>0$, and $\alpha= 1$. Then $D$ is an $\mathcal{F}$-metric.
\end{example}

It was proved in \cite{FA} (see also \cite{KS}) that there is a b-metric  space that is not an s-relaxed$_p$ metric space for any $K\geq 1$. We shall prove an analogous result  for the case of $\mathcal{F}$-metric spaces. First, recall that a mapping $d: X\times X\to [0,+\infty)$ is said to be a b-metric on $X$ if it satisfies (D1), (D2), and  
\begin{itemize}
\item[(S)'] There exists $K\geq 1$ such that 
$$
d(x,y)\leq K\left(d(x,z)+d(z,y)\right),\quad (x,y,z)\in X\times X\times X.
$$
\end{itemize}
Observe that (S)$\implies$ (S)'. Therefore, any s-relaxed$_p$ metric is a b-metric. However, as we mentioned before, the converse is not true in general.

\begin{proposition}\label{T0}
There is a b-metric space that is not an $\mathcal{F}$-metric space.
\end{proposition}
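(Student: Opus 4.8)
The plan is to produce an explicit counterexample. Take $X=\mathbb{R}$ and define $D:X\times X\to[0,+\infty)$ by $D(x,y)=(x-y)^2$. The strategy has two parts: first show that $(X,D)$ is a b-metric space, and then show that it cannot be an $\mathcal{F}$-metric space.

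That $D$ is a b-metric is routine. Properties (D1) and (D2) are immediate from the definition. For (S)$'$, I would use the elementary inequality $2ab\le a^2+b^2$: for any $x,y,z\in\mathbb{R}$,
\[
D(x,y)=\big((x-z)+(z-y)\big)^2\le 2(x-z)^2+2(z-y)^2=2\big(D(x,z)+D(z,y)\big),
\]
so (S)$'$ holds with $K=2$.

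The heart of the argument is that $(X,D)$ violates (D3) for \emph{every} admissible pair $(f,\alpha)$, and the mechanism is that a segment can be subdivided into many short pieces whose $D$-lengths sum to something arbitrarily small. Arguing by contradiction, suppose $(f,\alpha)\in\mathcal{F}\times[0,+\infty)$ witnesses the $\mathcal{F}$-metric property. Fix $x=0$ and $y=1$, so that $D(x,y)=1>0$. For $N\in\mathbb{N}$, $N\ge 2$, take the chain $u_i=\frac{i-1}{N-1}$, $i=1,\dots,N$, which satisfies $(u_1,u_N)=(0,1)$ and
\[
\sum_{i=1}^{N-1}D(u_i,u_{i+1})=(N-1)\cdot\frac{1}{(N-1)^2}=\frac{1}{N-1}.
\]
Then (D3) forces $f(1)\le f\!\left(\frac{1}{N-1}\right)+\alpha$ for all $N\ge 2$. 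Since $\frac{1}{N-1}\to 0$ as $N\to+\infty$, property ($\mathcal{F}_2$) yields $f\!\left(\frac{1}{N-1}\right)\to-\infty$, so the right-hand side tends to $-\infty$ while $f(1)$ is a fixed real number — a contradiction. Hence no such $(f,\alpha)$ exists and $(X,D)$ is not an $\mathcal{F}$-metric space.

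There is no serious obstacle here beyond choosing the right example: the point to notice is that the subdivision trick, which is precisely what separates b-metrics from $s$-relaxed$_p$ metrics, also defeats the $\mathcal{F}$-metric inequality once ($\mathcal{F}_2$) is invoked to drive $f$ of the (vanishing) polygonal sum to $-\infty$. One could equally well work with $X=[0,+\infty)$ or $X=\mathbb{Q}$; what matters is that the underlying set be rich enough to admit the shrinking chains — note, in contrast, that on $X=\mathbb{N}$ the same formula $D(x,y)=(x-y)^2$ \emph{does} give an $\mathcal{F}$-metric, cf. Example \ref{exr}.
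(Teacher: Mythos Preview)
Your proof is correct and follows essentially the same approach as the paper's: the same counterexample $D(x,y)=(x-y)^2$ (the paper takes $X=[0,1]$ rather than $\mathbb{R}$, which is immaterial), the same equidistant subdivision of $[0,1]$, and the same appeal to~($\mathcal{F}_2$) to drive $f\!\left(\frac{1}{n}\right)+\alpha\to-\infty$ against the fixed real number $f(1)$. One small caveat on your closing parenthetical: it is indeed true that $(x-y)^2$ defines an $\mathcal{F}$-metric on $\mathbb{N}$ (e.g.\ with $f(t)=-1/t$, $\alpha=1$), but Example~\ref{exr} uses a different, piecewise formula, so the cross-reference is slightly misleading.
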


\begin{proof}
Let $X=[0,1]$, and let $d: X\times X\to [0,+\infty)$ be the mapping defined by
$$
d(x,y)=(x-y)^2,\quad (x,y)\in X\times X.
$$ 
It can be easily seen (see, for example \cite{KS}) that $d$ is a b-metric on $X$ with constant $K=2$.  Suppose that there exists $(f,\alpha)\in \mathcal{F}\times [0,+\infty)$ such that $d$ satisfies (D3). Let $n\in \mathbb{N}^*$,   and let 
$$
u_i=\frac{i}{n},\quad i=0,2,\cdots,n.
$$ 
By (D3), we have
$$
f(d(0,1))\leq f\left(d(0,u_1)+d(u_1,u_2)+\cdots+d(u_{n-1},1)\right)+\alpha,\quad n\in \mathbb{N}^*,
$$
i.e.,
$$
f(1)\leq f\left(\frac{1}{n}\right)+\alpha,
\quad n\in \mathbb{N}^*.
$$
On the other hand, by ($\mathcal{F}_2$), we have
$$
\lim_{n\to +\infty} f\left(\frac{1}{n}\right)+\alpha=-\infty,
$$
which is a contradiction. 
\end{proof}

\begin{remark}
We proved in  Example \ref{EXJ} that the mapping $D$ defined 
by \eqref{DDD} is an $\mathcal{F}$-metric on $X$ but it is not an $s$-relaxed$_p$ metric. It can be easily seen that $D$ is not also a b-metric on $X$. 
\end{remark}

\section{Characterization of $\mathcal{F}$-metrics}\label{sec3}

In this section, we introduce the concept of $\mathcal{F}$-metric boundedness, which will be used later to give a characterization  of $\mathcal{F}$-metrics.

\begin{definition}
Let $X$ be a nonempty set, and let $D: X\times X\to [0,+\infty)$ be a given mapping satisfying (D1) and (D2). We say that the pair $(X,D)$ is $\mathcal{F}$-metric bounded with respect to   $(f,\alpha)\in \mathcal{F}\times [0,+\infty)$, if there exists a metric $d$ on $X$ such that 
\begin{equation}\label{MB}
(x,y)\in X\times X,\, D(x,y)>0\implies f(d(x,y))\leq f(D(x,y))\leq f(d(x,y))+\alpha.
\end{equation}
\end{definition}

We have the following result.

\begin{theorem}\label{T1}
Let $X$ be a nonempty set, and let $D: X\times X\to [0,+\infty)$ be a given mapping satisfying (D1) and (D2). Let $(f,\alpha)\in \mathcal{F}\times [0,+\infty)$, and suppose that $f$ is continuous from the right. Then the following statements are equivalent:
\begin{itemize}
\item[(i)] $(X,D)$ is an $\mathcal{F}$-metric on $X$ with $(f,\alpha)$ defined above.
\item[(ii)] $(X,D)$ is $\mathcal{F}$-metric bounded with respect to   $(f,\alpha)$.
\end{itemize}
\end{theorem}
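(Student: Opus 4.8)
The plan is to realize statement (ii) by means of an explicit, canonical metric: the \emph{chain function} attached to $D$,
\[
\rho(x,y)=\inf\left\{\sum_{i=1}^{N-1}D(u_i,u_{i+1}):\ N\in\mathbb{N},\ N\geq 2,\ (u_i)_{i=1}^N\subset X,\ u_1=x,\ u_N=y\right\}.
\]

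\textbf{(i)$\Longrightarrow$(ii).} First one checks that $\rho$ is a metric. Symmetry comes from reversing chains (using (D2)), the triangle inequality from concatenating chains, and $\rho(x,x)=0$ is clear; the only genuine issue is that $\rho(x,y)>0$ when $x\neq y$. Here $D(x,y)>0$, so (D3) applied to an \emph{arbitrary} admissible chain gives $f\big(\sum_{i=1}^{N-1}D(u_i,u_{i+1})\big)\geq f(D(x,y))-\alpha$, a fixed real number; by ($\mathcal{F}_2$) the sums $\sum_{i=1}^{N-1}D(u_i,u_{i+1})$ therefore cannot approach $0$, whence $\rho(x,y)>0$. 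Next one verifies \eqref{MB} with $d=\rho$. The left inequality $f(\rho(x,y))\leq f(D(x,y))$ is immediate from $\rho(x,y)\leq D(x,y)$ (take the two-term chain $u_1=x,u_2=y$) and ($\mathcal{F}_1$). For the right inequality, (D3) yields $f(D(x,y))-\alpha\leq f\big(\sum_{i=1}^{N-1}D(u_i,u_{i+1})\big)$ for every chain, hence $f(D(x,y))-\alpha\leq\inf f\big(\sum_{i=1}^{N-1}D(u_i,u_{i+1})\big)$, the infimum being over all chains from $x$ to $y$; the key point is that this infimum equals $f(\rho(x,y))$. Taking chains whose sums tend to $\rho(x,y)>0$ from above and using that $f$ is non-decreasing and right-continuous at $\rho(x,y)$ gives exactly $\inf f(\cdots)=f(\rho(x,y))$, so $f(D(x,y))\leq f(\rho(x,y))+\alpha$.

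\textbf{(ii)$\Longrightarrow$(i).} Now assume a metric $d$ on $X$ satisfies \eqref{MB}; (D1) and (D2) are given. To get (D3), fix $(x,y)$ with $D(x,y)>0$ (so $x\neq y$ and $d(x,y)>0$) and any chain $u_1=x,\dots,u_N=y$. Using the right half of \eqref{MB}, the triangle inequality for $d$, and ($\mathcal{F}_1$),
\[
f(D(x,y))\leq f(d(x,y))+\alpha\leq f\left(\sum_{i=1}^{N-1}d(u_i,u_{i+1})\right)+\alpha .
\]
On edges with $u_i=u_{i+1}$ both $d$ and $D$ vanish; on the remaining edges $D(u_i,u_{i+1})>0$, so the left half of \eqref{MB} together with the monotonicity of $f$ yields the edgewise bound $d(u_i,u_{i+1})\leq D(u_i,u_{i+1})$. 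Summing and applying ($\mathcal{F}_1$) once more turns the display into (D3).

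\textbf{Expected main obstacle.} The delicate step is the interchange of the infimum over chains with $f$ in the direction (i)$\Longrightarrow$(ii): this is precisely where right-continuity of $f$ is indispensable, since without it one only reaches $f(D(x,y))\leq f(\rho(x,y)^{+})+\alpha$, which in general is strictly weaker than \eqref{MB}; it is also why $\rho(x,y)>0$ must be secured first, so that $f$ is defined and right-continuous there. The converse direction is comparatively soft, the only point deserving care being how the edgewise consequence $f(d(u_i,u_{i+1}))\leq f(D(u_i,u_{i+1}))$ of \eqref{MB} is assembled into the comparison between $\sum_{i=1}^{N-1}d(u_i,u_{i+1})$ and $\sum_{i=1}^{N-1}D(u_i,u_{i+1})$, where the monotonicity properties of functions in $\mathcal{F}$ must be invoked.
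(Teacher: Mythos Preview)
Your proposal is correct and follows essentially the same approach as the paper: both introduce the chain-infimum metric (your $\rho$, the paper's $d$), verify it is a metric via (D3) and ($\mathcal{F}_2$), obtain the left half of \eqref{MB} from $\rho\leq D$ and the right half by approximating $\rho$ with chain sums and invoking right-continuity of $f$, and handle (ii)$\Rightarrow$(i) by combining the edgewise bound $d(u_i,u_{i+1})\leq D(u_i,u_{i+1})$ with the triangle inequality for $d$. The only cosmetic difference is that you phrase the right-continuity step as identifying $\inf f(\sum\cdots)$ with $f(\rho(x,y))$, whereas the paper passes to the limit in $f(D(x,y))\leq f(\rho(x,y)+\varepsilon)+\alpha$ as $\varepsilon\to 0^+$; these are the same argument.
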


\newpage

\begin{proof}
(i)$\implies$ (ii):  Assume that $(X,D)$ is  an $\mathcal{F}$-metric on $X$ with respect to  $(f,\alpha)$. Let us define the mapping $d: X\times X\to [0,+\infty)$  by
$$
d(x,y)=\inf\left\{\sum_{i=1}^{N-1} D(u_i,u_{i+1}):\, N\in \mathbb{N}, \, N\geq 2, \, (u_i)_{i=1}^N \subset X,\, (u_1,u_N)=(x,y)\right\},
$$
for all $(x,y)\in X\times X$. We shall prove that $d$ is a metric on $X$. Since $D(x,x)=0$, for all $x\in X$, it follows from the definition of $d$ that 
$$
d(x,x)=0,\quad x\in X.
$$
Now, let $(x,y)\in X\times X$ be such that $x\neq y$. Suppose that $d(x,y)=0$. Let $\varepsilon>0$, by the definition of $d$, there exist $N\in \mathbb{N}$, $N\geq 2$, and $(u_i)_{i=1}^N \subset X$ with $(u_1,u_N)=(x,y)$ such that
$$
\sum_{i=1}^{N-1} D(u_i,u_{i+1}) <\varepsilon.
$$
By ($\mathcal{F}_1$), we obtain
\begin{equation}\label{in1}
f\left(\sum_{i=1}^{N-1} D(u_i,u_{i+1})\right)\leq f(\varepsilon).
\end{equation}
On the other hand, by (D3), we have
\begin{equation}\label{in2}
f(D(x,y))\leq f\left(\sum_{i=1}^{N-1} D(u_i,u_{i+1})\right)+\alpha.
\end{equation}
Using \eqref{in1} and \eqref{in2}, we obtain
$$
f(D(x,y))\leq f(\varepsilon)+\alpha,\quad \varepsilon>0.
$$
But, using ($\mathcal{F}_2$), we have
$$
\lim_{\varepsilon\to 0^+}\left(f(\varepsilon)+\alpha\right)
=-\infty,
$$
which is a contradiction. Therefore, we have $d(x,y)>0$. From the definition of $d$ and (D2), it can be easily seen that $d(x,y)=d(y,x)$, for all $(x,y)\in X\times X$. In order to check the triangle inequality, let $x,y$ and $z$ be three given points in $X$, and let $\rho>0$. By the definition of $d$, there exist two chains of points $x=u_1,u_2,\cdots,u_n=y$ and $y=u_n,u_{n+1},\cdots,u_m=z$ such that 
$$
\sum_{i=1}^{n-1} D(u_i,u_{i+1})<d(x,y)+\rho
$$
and
$$
\sum_{i=n}^{m-1} D(u_i,u_{i+1})<d(y,z)+\rho.
$$
Adding the above inequalities, we obtain
$$
d(x,z)\leq \sum_{i=1}^{m-1}D(u_i,u_{i+1})<d(x,y)+d(y,z)+2\rho,\quad \rho>0.
$$
Passing to the limit as $\rho\to 0^+$, we get
$$
d(x,z)\leq d(x,y)+d(y,z).
$$
As consequence, we deduce that $d$ is a metric on $X$. Next, we shall prove that $d$ satisfies \eqref{MB}. Let $(x,y)\in X\times X$ be such that $D(x,y)>0$.  From the definition of $d$, it is clear that
$$
d(x,y)\leq D(x,y),
$$
which implies from ($\mathcal{F}_1$) that 
\begin{equation}\label{IK1}
f(d(x,y))\leq f(D(x,y)).
\end{equation}
Let $\varepsilon>0$. By the definition of $d$, there exist $N\in \mathbb{N}$, $N\geq 2$, and $(u_i)_{i=1}^N \subset X$ with $(u_1,u_N)=(x,y)$ such that
$$
\sum_{i=1}^{N-1} D(u_i,u_{i+1}) <d(x,y)+\varepsilon.
$$
By ($\mathcal{F}_1$), we obtain
$$
f\left(\sum_{i=1}^{N-1} D(u_i,u_{i+1})\right)\leq f(d(x,y)+\varepsilon).
$$
Using (D3) and the above inequality, we get
$$
f(D(x,y))\leq f(d(x,y)+\varepsilon)+\alpha,\quad \varepsilon>0.
$$
Passing to the limit as $\varepsilon\to 0^+$, and using the right continuity of $f$, we obtain
\begin{equation}\label{IK2}
f(D(x,y))\leq f(d(x,y))+\alpha.
\end{equation}
By \eqref{IK1} and \eqref{IK2}, we have
$$
f(d(x,y))\leq f(D(x,y))\leq f(d(x,y))+\alpha.
$$
Then \eqref{MB} is satisfied and $(X,D)$ is $\mathcal{F}$-metric bounded with respect to $(f,\alpha)$.\\
(ii)$\implies $(i): Suppose that $(X,D)$ is $\mathcal{F}$-metric bounded with respect to $(f,\alpha)$, that is, there exists a certain metric $d$ on $X$ such that \eqref{MB} is satisfied. We have just to prove that $D$ satisfies (D3). Let $(x,y)\in X\times X$ be such that $D(x,y)>0$. Let $N\in \mathbb{N}$, $N\geq 2$, and $(u_i)_{i=1}^N \subset X$ with $(u_1,u_N)=(x,y)$. Since $d$ is a metric on $X$, the triangle inequality yields
\begin{equation}\label{gd}
d(x,y)\leq \sum_{i=1}^{N-1} d(u_i,u_{i+1}).
\end{equation}
On the other hand, using ($\mathcal{F}_1$) and the fact that 
$$
(u,v)\in X\times X,\, D(u,v)>0\implies f(d(u,v))\leq f(D(u,v)), 
$$
we deduce that 
\begin{equation}\label{gd2}
d(u,v)\leq D(u,v),\quad (u,v)\in X\times X.
\end{equation}
By \eqref{gd} and \eqref{gd2}, we obtain
$$
d(x,y)\leq \sum_{i=1}^{N-1} D(u_i,u_{i+1}),
$$
which implies by ($\mathcal{F}_1$) that 
$$
f(d(x,y))+\alpha\leq f\left(\sum_{i=1}^{N-1} D(u_i,u_{i+1})\right)+\alpha.
$$
Using the above inequality and the fact that 
$$
f(D(x,y))\leq f(d(x,y))+\alpha,
$$
we deduce that 
$$
f(D(x,y))\leq f\left(\sum_{i=1}^{N-1} D(u_i,u_{i+1})\right)+\alpha.
$$
Therefore, (D3) is satisfied and $(X,D)$ is an $\mathcal{F}$-metric on $X$.
\end{proof}

\begin{remark}
Observe that from the proof of Theorem \ref{T1}, the right continuity assumption imposed on $f$ is used only to prove that (i)$\implies$ (ii). However, for any $f\in \mathcal{F}$, we have (ii)$\implies$ (i).
\end{remark}

\section{Topological $\mathcal{F}$-metric spaces}\label{sec4}

In this section, we discuss  a natural topology defined on  $\mathcal{F}$-metric spaces.

\begin{definition}
Let $(X,D)$ be an $\mathcal{F}$-metric space. A subset $\mathcal{O}$ of $X$ is said to be $\mathcal{F}$-open if for every $x\in \mathcal{O}$, there is some $r>0$ such that $B(x,r)\subset \mathcal{O}$, where
$$
B(x,r)=\{y\in X:\, D(x,y)<r\}.
$$
We say that a subset $\mathcal{C}$ of $X$ is $\mathcal{F}$-closed if $X\backslash \mathcal{C}$ is $\mathcal{F}$-open.
We denote by $\tau_{\mathcal{F}}$ the family of all $\mathcal{F}$-open subsets of $X$.
\end{definition}

The following result can be proved easily.

\begin{proposition}
Let $(X,D)$ be an $\mathcal{F}$-metric space. Then $\tau_{\mathcal F}$ is a topology on $X$.
\end{proposition}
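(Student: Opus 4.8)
The plan is to verify directly the three defining axioms of a topology, using only the definition of an $\mathcal{F}$-open set and the elementary monotonicity of the balls $B(x,r)$ in the radius $r$. First, $\emptyset\in\tau_{\mathcal F}$ holds vacuously, since there is no $x\in\emptyset$ to test, and $X\in\tau_{\mathcal F}$ since for every $x\in X$ and every $r>0$ we trivially have $B(x,r)\subset X$.

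Next I would check stability under arbitrary unions. Let $\{\mathcal{O}_\lambda\}_{\lambda\in\Lambda}\subset\tau_{\mathcal F}$ and put $\mathcal{O}=\bigcup_{\lambda\in\Lambda}\mathcal{O}_\lambda$. Given $x\in\mathcal{O}$, there is $\lambda_0\in\Lambda$ with $x\in\mathcal{O}_{\lambda_0}$; since $\mathcal{O}_{\lambda_0}$ is $\mathcal{F}$-open, there is $r>0$ with $B(x,r)\subset\mathcal{O}_{\lambda_0}\subset\mathcal{O}$. Hence $\mathcal{O}\in\tau_{\mathcal F}$.

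Then I would treat finite intersections; by induction it suffices to consider two sets $\mathcal{O}_1,\mathcal{O}_2\in\tau_{\mathcal F}$. Let $x\in\mathcal{O}_1\cap\mathcal{O}_2$. Choose $r_1,r_2>0$ with $B(x,r_i)\subset\mathcal{O}_i$ for $i=1,2$, and set $r=\min\{r_1,r_2\}>0$. If $y\in B(x,r)$, then $D(x,y)<r\leq r_i$, so $y\in B(x,r_i)$ for $i=1,2$; thus $B(x,r)\subset B(x,r_1)\cap B(x,r_2)\subset\mathcal{O}_1\cap\mathcal{O}_2$, and $\mathcal{O}_1\cap\mathcal{O}_2\in\tau_{\mathcal F}$.

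I do not expect any genuine obstacle: the argument is identical to the classical metric case, and the only property of the balls actually used is that $B(x,\cdot)$ is defined for every $r>0$ together with the monotonicity $0<r\leq r'\implies B(x,r)\subset B(x,r')$, which is immediate because each ball is cut out by a single strict inequality $D(x,y)<r$. Note in particular that neither (D2) nor (D3) is invoked here; these enter only later when one studies finer topological properties such as the behaviour of closed balls.
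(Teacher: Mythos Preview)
Your proof is correct and is exactly the standard verification one would expect. The paper does not actually give a proof of this proposition; it simply states that the result ``can be proved easily,'' and your argument supplies precisely the routine check the authors had in mind.
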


\begin{proposition}\label{IH}
Let $(X,D)$ be an $\mathcal{F}$-metric space. Then, for any nonempty subset $A$ of $X$, the following statements are equivalent:
\begin{itemize}
\item[(i)] $A$ is $\mathcal{F}$-closed.
\item[(ii)] For any sequence $\{x_n\}\subset A$, we have
$$
\lim_{n\to +\infty}D(x_n,x)=0,\,x\in X\implies x\in A.
$$
\end{itemize}
\end{proposition}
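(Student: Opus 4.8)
The plan is to prove the two implications separately, each by contradiction, working directly from the definition of $\mathcal{F}$-open rather than from any abstract metrizability argument. The key observation is that even though a ball $B(x,r)$ need not itself be $\mathcal{F}$-open, the countable collection $\{B(x,1/n)\}_{n\geq 1}$ still behaves like a local base at $x$ for the purpose of detecting closedness, and the symmetry axiom (D2) lets us pass freely between $D(x,x_n)$ and $D(x_n,x)$. No use of (D3) is expected; only (D2) and the Archimedean property of $\mathbb{N}$ enter.

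For (i)$\implies$(ii): assume $A$ is $\mathcal{F}$-closed, take $\{x_n\}\subset A$ with $\lim_{n\to+\infty}D(x_n,x)=0$, and suppose for contradiction that $x\notin A$. Then $x\in X\backslash A$, which is $\mathcal{F}$-open, so there is $r>0$ with $B(x,r)\subset X\backslash A$. By (D2) we have $D(x,x_n)=D(x_n,x)\to 0$, so there is $N$ with $D(x,x_n)<r$, hence $x_n\in B(x,r)\subset X\backslash A$, for all $n\geq N$. This contradicts $x_n\in A$, so $x\in A$.

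For (ii)$\implies$(i): assume the sequential condition (ii) holds; we must show $X\backslash A$ is $\mathcal{F}$-open. Fix $x\in X\backslash A$ and suppose, for contradiction, that no $r>0$ satisfies $B(x,r)\subset X\backslash A$; then for every $r>0$ the set $B(x,r)\cap A$ is nonempty. Applying this with $r=1/n$ for each $n\in\mathbb{N}^*$, choose $x_n\in B(x,1/n)\cap A$. Then $x_n\in A$ and $D(x_n,x)=D(x,x_n)<1/n$ by (D2), so $\lim_{n\to+\infty}D(x_n,x)=0$; hypothesis (ii) forces $x\in A$, contradicting $x\in X\backslash A$. Hence some ball $B(x,r)$ is contained in $X\backslash A$, and since $x$ was arbitrary, $X\backslash A$ is $\mathcal{F}$-open, i.e.\ $A$ is $\mathcal{F}$-closed.

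There is no genuine obstacle here; the proof is a routine adaptation of the classical metric-space argument. The only point requiring a little care is that one should not invoke first countability or sequential characterizations of closure as a black box, because balls are not known to be $\mathcal{F}$-open in this setting (this is precisely the phenomenon discussed around Proposition \ref{CL}); instead one argues straight from the definition of $\mathcal{F}$-open, which is what the two paragraphs above do.
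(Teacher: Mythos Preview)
Your proof is correct and follows essentially the same route as the paper's own argument: both directions are handled by contradiction, with (i)$\Rightarrow$(ii) using a ball $B(x,r)\subset X\backslash A$ to trap a tail of the sequence, and (ii)$\Rightarrow$(i) choosing $x_n\in B(x,1/n)\cap A$ to manufacture a convergent sequence in $A$. The only cosmetic difference is that you make the use of (D2) explicit when passing between $D(x_n,x)$ and $D(x,x_n)$, whereas the paper leaves this implicit.
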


\begin{proof}
Assume that $A$ is $\mathcal{F}$-closed, and let $\{x_n\}$ be a sequence in $A$ such that 
\begin{equation}\label{LS}
\lim_{n\to +\infty}D(x_n,x)=0,
\end{equation}
where $x\in X$. Suppose that $x\in X\backslash A$. Since $A$ is $\mathcal{F}$-closed, $X\backslash A$ is $\mathcal{F}$-open. Therefore, there exists some $r>0$ such that $B(x,r)\subset X\backslash A$, i.e. $B(x,r)\cap A=\emptyset$. On the other hand,  by \eqref{LS}, there exists some $N\in \mathbb{N}$ such that 
$$
D(x_n,x)<r,\quad n\geq N,
$$
i.e.
$$
x_n\in B(x,r),\quad n\geq N.
$$ 
Hence, $x_N\in B(x,r)\cap A$, which leads to a contradiction. Therefore, we deduce that $x\in A$, and (i)$\implies$(ii) is proved. Conversely, assume that (ii) is satisfied.  Let $x\in X\backslash A$. We have to prove that there is some $r>0$ such that $B(x,r)\subset X\backslash A$. We argue by contradiction by supposing that for every $r>0$, there exists $x_r\in B(x,r)\cap A$. This implies that for any $n\in \mathbb{N}^*$, there exists $x_n\in B(x,\frac{1}{n})\cap A$. 
Then 
$$
\{x_n\}\subset A,\, \lim_{n\to +\infty} D(x_n,x)=0.
$$
By (ii), this implies that $x\in A$, which is a contradiction with $x\in X\backslash A$. Hence, $A$ is $\mathcal{F}$-closed, and (ii) $\implies $(i).
\end{proof}

\begin{proposition}\label{CL}
Let $(X,D)$ be an $\mathcal{F}$-metric space, $a\in X$, and $r>0$. We denote by $\mathbf{B}(a,r)$ the subset of $X$ defined by
$$
\mathbf{B}(a,r)=\{x\in X:\, D(a,x)\leq r\}.
$$
Suppose that for every sequence $\{x_n\}\subset X$, we have
\begin{equation}\label{JAL}
\lim_{n\to +\infty} D(x_n,x)=0,\,x\in X \implies D(x,y)\leq \limsup_{n\to +\infty} D(x_n,y),\, y\in X.
\end{equation}
Then $\mathbf{B}(a,r)$ is $\mathcal{F}$-closed.
\end{proposition}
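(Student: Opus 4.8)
The plan is to reduce the claim to a sequential statement via Proposition \ref{IH} and then feed the hypothesis \eqref{JAL} directly into it. Since $\mathbf{B}(a,r)$ is a nonempty subset of $X$ (it contains $a$, as $D(a,a)=0\leq r$ by (D1)), Proposition \ref{IH} tells us that $\mathbf{B}(a,r)$ is $\mathcal{F}$-closed if and only if it is closed under $D$-convergence of sequences. So I would fix a sequence $\{x_n\}\subset \mathbf{B}(a,r)$ and a point $x\in X$ with $\lim_{n\to +\infty} D(x_n,x)=0$, and aim to show $x\in \mathbf{B}(a,r)$, i.e.\ $D(a,x)\leq r$.

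The key step is then a one-line computation. By definition of $\mathbf{B}(a,r)$ we have $D(a,x_n)\leq r$ for every $n$, and by (D2) this is the same as $D(x_n,a)\leq r$; hence $\limsup_{n\to +\infty} D(x_n,a)\leq r$. Now apply the hypothesis \eqref{JAL} with the fixed sequence $\{x_n\}$, the limit point $x$, and the choice $y=a$: this yields
$$
D(x,a)\leq \limsup_{n\to +\infty} D(x_n,a)\leq r.
$$
Using (D2) once more gives $D(a,x)=D(x,a)\leq r$, so $x\in \mathbf{B}(a,r)$, which by Proposition \ref{IH} establishes that $\mathbf{B}(a,r)$ is $\mathcal{F}$-closed.

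There is essentially no serious obstacle here; the only point requiring care is to correctly invoke Proposition \ref{IH} (whose equivalence legitimizes working with sequences rather than with open complements directly) and to keep track of the symmetry (D2) so that the roles of the two arguments of $D$ in the membership condition $D(a,x_n)\le r$ and in \eqref{JAL}, which is stated with the "moving" index in the first slot, are correctly matched. If one preferred a direct argument avoiding Proposition \ref{IH}, one could instead argue by contradiction: assume $x\in X\setminus \mathbf{B}(a,r)$ is not an interior point of the complement, extract a sequence in $\mathbf{B}(a,r)$ converging to $x$ in the sense $D(x_n,x)\to 0$, and derive the same contradiction $D(a,x)\le r$; but routing through Proposition \ref{IH} is cleaner and is the route I would take.
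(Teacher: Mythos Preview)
Your proposal is correct and follows essentially the same route as the paper: reduce to the sequential characterization via Proposition \ref{IH}, then apply the hypothesis \eqref{JAL} with $y=a$ together with $D(x_n,a)\leq r$ to conclude $D(x,a)\leq r$. Your write-up is in fact slightly more careful than the paper's, since you explicitly note the nonemptiness of $\mathbf{B}(a,r)$ (needed to invoke Proposition \ref{IH}) and the uses of (D2).
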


\begin{proof}
Let $\{x_n\}\subset \mathbf{B}(a,r)$ be a sequence such that
$$
\lim_{n\to +\infty} D(x_n,x)=0,
$$
for a certain $x\in X$. From proposition \ref{IH}, we have to prove that $x\in \mathbf{B}(a,r)$. By the definition of $\mathbf{B}(a,r)$, we have
$$
D(x_n,a)\leq r,\quad n\in \mathbb{N}.
$$
Passing to the supremum limit as $n\to +\infty$ and using \eqref{JAL}, we obtain
$$
D(x,a)\leq \limsup_{n\to +\infty} D(x_n,a)\leq r,
$$
which yields $x\in \mathbf{B}(a,r)$. Therefore, $\mathbf{B}(a,r)$ is $\mathcal{F}$-closed.
 \end{proof}

\begin{remark}
Proposition \ref{CL} provides only a sufficient condition ensuring that  $\mathbf{B}(a,r)$ is $\mathcal{F}$-closed. An interesting problem consists to find a necessary and sufficient condition under which $\mathbf{B}(a,r)$ is $\mathcal{F}$-closed.
\end{remark}

\begin{definition}
Let $(X,D)$ be an $\mathcal{F}$-metric space. Let $A$ be a nonempty subset of $X$. We denote by $\overline{A}$ the closure of $A$ with respect to the topology $\tau_{\mathcal{F}}$, i.e. $\overline{A}$ is  the intersection of all $\mathcal{F}$-closed subsets of $X$  containing $A$.  Clearly, $\overline{A}$ is the smallest $\mathcal{F}$-closed subset which contains $A$. 
\end{definition}

\begin{proposition}\label{ayham}
Let $(X,D)$ be an $\mathcal{F}$-metric space.  Then, for any nonempty subset $A$ of $X$, we have 
$$
x\in \overline{A},\, r>0\implies B(x,r)\cap A\neq \emptyset.
$$
\end{proposition}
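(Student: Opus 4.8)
The plan is to avoid working directly with open balls (which, as the paper stresses, are not known to be $\mathcal{F}$-open in general) and instead to produce by hand an $\mathcal{F}$-closed set containing $A$ and then appeal to the minimality of $\overline{A}$. Fix $(f,\alpha)\in\mathcal{F}\times[0,+\infty)$ witnessing that $D$ is an $\mathcal{F}$-metric, and set
$$
U=\{y\in X:\ \exists\,\rho>0\ \text{with}\ B(y,\rho)\cap A=\emptyset\}.
$$
The first, routine observation is that $A\cap U=\emptyset$: if $a\in A$ then $D(a,a)=0<\rho$ gives $a\in B(a,\rho)\cap A$ for every $\rho>0$, so $a\notin U$. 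Hence $A\subseteq X\setminus U$, and it remains to show that $X\setminus U$ is $\mathcal{F}$-closed, i.e. that $U\in\tau_{\mathcal{F}}$.

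The heart of the argument is the openness of $U$. Let $y\in U$ and pick $\rho_0>0$ with $B(y,\rho_0)\cap A=\emptyset$; note $f(\rho_0)$ is a fixed real number. By $(\mathcal{F}_2)$ we have $f(s)\to-\infty$ as $s\to0^+$, so I would choose $s>0$ small enough that $f(s)+\alpha<f(\rho_0)$, and claim $B(y,s)\subseteq U$. Suppose not: there is $z\in B(y,s)$ with $z\notin U$, so $B(z,\rho)\cap A\neq\emptyset$ for every $\rho>0$. Since $D(y,z)<s$, fix $\rho\in\big(0,\,s-D(y,z)\big)$ and pick $a\in B(z,\rho)\cap A$; then $a\in A$, $D(z,a)<\rho$, and $D(y,z)+D(z,a)<s$. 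If $D(y,a)=0$ then $y=a\in A$ and $y\in B(y,\rho_0)\cap A$, a contradiction; similarly $D(y,z)+D(z,a)>0$, else $y=z=a\in A$. Now apply (D3) to the length-three chain $(u_1,u_2,u_3)=(y,z,a)$: this gives $f(D(y,a))\le f\big(D(y,z)+D(z,a)\big)+\alpha$, and since $0<D(y,z)+D(z,a)<s$ we get, using $(\mathcal{F}_1)$, that $f(D(y,a))\le f(s)+\alpha<f(\rho_0)$. Finally $f(D(y,a))<f(\rho_0)$ forces $D(y,a)<\rho_0$ (the contrapositive of $(\mathcal{F}_1)$: from $u\ge\rho_0>0$ one would get $f(u)\ge f(\rho_0)$), so $a\in B(y,\rho_0)\cap A$, contradicting the choice of $\rho_0$. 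Hence $B(y,s)\subseteq U$, and $U$ is $\mathcal{F}$-open.

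With $U$ open, $X\setminus U$ is $\mathcal{F}$-closed and contains $A$, so the minimality of $\overline{A}$ yields $\overline{A}\subseteq X\setminus U$; thus $x\in\overline{A}$ implies $x\notin U$, which by definition of $U$ is exactly ``$B(x,r)\cap A\neq\emptyset$ for every $r>0$'', completing the proof. The one genuinely delicate point is the openness of $U$: because balls need not be $\mathcal{F}$-open, the naive choice $X\setminus B(x,r)$ of a closed set fails, and the key device is to shrink the radius $s$ so aggressively that $f(s)\to-\infty$ swallows the additive constant $\alpha$ coming from (D3); everything else is bookkeeping about when distances vanish.
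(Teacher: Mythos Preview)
Your proof is correct and follows essentially the same route as the paper: your set $U$ is exactly the complement of the paper's set $A'=\{x\in X:\forall r>0,\ B(x,r)\cap A\neq\emptyset\}$, and the core estimate---using $(\mathcal{F}_2)$ to pick a radius small enough that the additive $\alpha$ in (D3) is absorbed---is identical. The only cosmetic difference is that the paper shows $A'$ is $\mathcal{F}$-closed via the sequential characterization (Proposition~\ref{IH}), whereas you verify directly from the ball definition that $U$ is $\mathcal{F}$-open; both arguments are equivalent and equally valid.
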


\begin{proof}
Let $(f,\alpha)\in \mathcal{F}\times [0,+\infty)$ be such that (D3) is satisfied. Let us define the set
$$
A'=\{x\in X:\, \mbox{ for any } r>0, \mbox{ there exists } a\in A:\, D(x,a)<r\}.
$$
By (D1), it can be easily seen that $A\subset A'$. Next, we shall prove that $A'$ is $\mathcal{F}$-closed. Let $\{x_n\}$ be a sequence in $A'$ such that
\begin{equation}\label{eqA}
\lim_{n\to +\infty} D(x_n,x)=0,\quad x\in X.
\end{equation}
Let $r>0$. By ($\mathcal{F}_2$), there exists some $\delta_r>0$ such that
\begin{equation}\label{delta}
0<t<\delta_r\implies \mu(t)<\mu(r)-\alpha.
\end{equation}
On the other hand, by \eqref{eqA}, there exists some $N\in \mathbb{N}$ such that
$$
D(x_n,x)< \frac{\delta_r}{3},\quad n\geq N.
$$
Since $x_N\in A'$, there exists $a\in A$ such that 
$$
D(x_N,a)< \frac{\delta_r}{3}.
$$ 
If $D(x,a)>0$, by (D3), we have 
$$
f(D(x,a))\leq f(D(x,x_N)+D(x_N,a))+\alpha\leq f\left(\frac{2\delta_r}{3}\right)+\alpha.
$$
But by \eqref{delta}, since $\frac{2\delta_r}{3}<\delta_r$, we obtain
$$
f\left(\frac{2\delta_r}{3}\right)<f(r)-\alpha.
$$
Hence,
$$
f(D(x,a))<f(r),
$$
which implies from ($\mathcal{F}_1$) that $D(x,a)<r$. Therefore, in all cases, we have
$$
D(x,a)<r,
$$
which yields $x\in A'$. Then by Proposition \ref{IH}, $A'$ is $\mathcal{F}$-closed, which contains $A$. Then $\overline{A}\subset A'$, which yields the desired result.
\end{proof}

\begin{definition}
Let $(X,D)$ be an $\mathcal{F}$-metric space. Let $\{x_n\}$ be a sequence in $X$. We say that $\{x_n\}$ is $\mathcal{F}$-convergent to $x\in X$ if  $\{x_n\}$ is convergent to $x$ with respect to the topology $\tau_{\mathcal{F}}$, i.e. for every $\mathcal{F}$-open subset $\mathcal{O}_x$ of $X$ containing $x$, there exists some $N\in \mathbb{N}$ such that $x_n\in \mathcal{O}_x$, for all $n\geq N$. In this case, we say that $x$ is the limit of $\{x_n\}$.
\end{definition}

The following result follows immediately from the above definition and the definition of $\tau_{\mathcal{F}}$. 

\begin{proposition}
Let $(X,D)$ be an $\mathcal{F}$-metric space. Let $\{x_n\}$ be a sequence in $X$, and $x\in X$. The following statements are equivalent:
\begin{itemize}
\item[(i)] $\{x_n\}$ is $\mathcal{F}$-convergent to $x$.
\item[(ii)] $\displaystyle\lim_{n\to +\infty}D(x_n,x)=0$.
\end{itemize}
\end{proposition}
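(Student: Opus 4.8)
The plan is to prove the two implications separately. The implication (ii)$\implies$(i) is essentially immediate from the definitions: given an arbitrary $\mathcal{F}$-open set $\mathcal{O}_x$ containing $x$, the definition of $\tau_{\mathcal{F}}$ provides some $r>0$ with $B(x,r)\subset\mathcal{O}_x$; since $\lim_{n\to+\infty}D(x_n,x)=0$, there is $N\in\mathbb{N}$ such that $D(x_n,x)<r$, i.e. $x_n\in B(x,r)\subset\mathcal{O}_x$, for all $n\geq N$. Hence $\{x_n\}$ is $\mathcal{F}$-convergent to $x$.

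For (i)$\implies$(ii) I would argue by contraposition. Suppose $D(x_n,x)\not\to 0$. Then there exist $r>0$ and an infinite set of indices $n$ for which $D(x_n,x)\geq r$; collect the corresponding terms into the nonempty set $A=\{x_n:\,D(x_n,x)\geq r\}$. By (D2), every $a\in A$ satisfies $D(x,a)=D(a,x)\geq r$, so $a\notin B(x,r)$; thus $B(x,r)\cap A=\emptyset$. Reading Proposition \ref{ayham} in contrapositive form, the existence of a radius $r>0$ with $B(x,r)\cap A=\emptyset$ forces $x\notin\overline{A}$. Now set $\mathcal{O}=X\setminus\overline{A}$: since $\overline{A}$ is $\mathcal{F}$-closed, $\mathcal{O}$ is $\mathcal{F}$-open, and $x\in\mathcal{O}$. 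If $\{x_n\}$ were $\mathcal{F}$-convergent to $x$, there would be $N\in\mathbb{N}$ with $x_n\in\mathcal{O}$ for all $n\geq N$; but the index set $\{n:\,D(x_n,x)\geq r\}$ is infinite, so some such index is $\geq N$, and for that $n$ we would have $x_n\in A\subseteq\overline{A}$ and simultaneously $x_n\in\mathcal{O}=X\setminus\overline{A}$, a contradiction. Hence $\{x_n\}$ is not $\mathcal{F}$-convergent to $x$, which completes the proof by contraposition.

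The point to be careful about — and where the naive argument fails — is that the ball $B(x,r)$ itself need not belong to $\tau_{\mathcal{F}}$, because the additive constant $\alpha$ in (D3) obstructs the usual ``open ball is open'' argument; so one cannot simply exhibit $B(x,r)$ as an $\mathcal{F}$-open neighbourhood of $x$ avoiding the bad terms of the sequence. Routing the argument through the closure characterization of Proposition \ref{ayham} (whose proof already absorbed exactly this difficulty, via the ($\mathcal{F}_2$)-based choice of $\delta_r$ applied along a two-point chain) is what makes everything go through cleanly, with no further estimates required.
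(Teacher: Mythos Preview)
Your proof is correct. The paper does not actually supply a proof of this proposition; it simply asserts that the equivalence ``follows immediately from the above definition and the definition of $\tau_{\mathcal{F}}$.'' Your argument for (ii)$\implies$(i) is indeed immediate in that sense. For (i)$\implies$(ii), however, you are right to flag a subtlety the paper glosses over: the open ball $B(x,\varepsilon)$ is not known to be $\mathcal{F}$-open (the additive constant $\alpha$ in (D3) blocks the usual nesting-of-balls argument at points $y\neq x$), so one cannot simply feed $B(x,\varepsilon)$ into the definition of $\mathcal{F}$-convergence. Your detour through the contrapositive of Proposition~\ref{ayham} is a clean and non-circular fix, since that proposition precedes the present one and its proof (via Proposition~\ref{IH}) relies only on the definitions and on~(D3). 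In effect your argument establishes that $x$ lies in the $\tau_{\mathcal{F}}$-interior of every ball $B(x,r)$ --- which is exactly the ingredient needed to make the paper's ``immediate'' claim honest.
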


The next result shows that the limit of an  $\mathcal{F}$-convergent sequence is unique.

\begin{proposition}
Let $(X,D)$ be an $\mathcal{F}$-metric space. Let $\{x_n\}$ be a sequence in $X$. Then
$$
(x,y)\in X\times X,\, \lim_{n\to +\infty} D(x_n,x)=\lim_{n\to +\infty} D(x_n,y)=0\implies x=y.
$$
\end{proposition}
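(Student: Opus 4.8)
The plan is to argue by contradiction, imitating the standard proof that limits are unique in a metric space, but using (D3) together with property ($\mathcal{F}_2$) in place of the triangle inequality. Suppose $(x,y)\in X\times X$ satisfies $\lim_{n\to+\infty}D(x_n,x)=\lim_{n\to+\infty}D(x_n,y)=0$, yet $x\neq y$. By (D1) this forces $D(x,y)>0$, so the implication in (D3) is nonvacuous for the pair $(x,y)$.

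Next I would apply (D3) with $N=3$ and the three-point chain $(u_1,u_2,u_3)=(x,x_n,y)$, for each fixed $n\in\mathbb{N}$. Before doing so I would record the small observation that $D(x,x_n)+D(x_n,y)>0$: if this sum vanished, then $D(x,x_n)=D(x_n,y)=0$, hence $x=x_n=y$ by (D1), contradicting $x\neq y$. Thus $\sum_{i=1}^{2}D(u_i,u_{i+1})=D(x,x_n)+D(x_n,y)$ lies in $(0,+\infty)$, the domain of $f$, and (D3) gives
$$
f(D(x,y))\leq f\big(D(x,x_n)+D(x_n,y)\big)+\alpha,\qquad n\in\mathbb{N}.
$$

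Finally, put $t_n=D(x,x_n)+D(x_n,y)$, using (D2) to identify $D(x_n,x)$ with $D(x,x_n)$. The hypotheses yield $t_n\to 0$ as $n\to+\infty$, with $t_n>0$ for every $n$, so by ($\mathcal{F}_2$) we have $\lim_{n\to+\infty}f(t_n)=-\infty$, and hence the right-hand side of the displayed inequality tends to $-\infty$. Since the left-hand side $f(D(x,y))$ is a fixed real number independent of $n$, letting $n\to+\infty$ produces a contradiction. Therefore $x=y$.

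I do not anticipate a genuine obstacle here; the proof is short. The only point requiring a moment's care is ensuring that the argument of $f$ remains strictly positive, so that (D3) and ($\mathcal{F}_2$) apply verbatim — and this is exactly what the (D1)-based observation in the second paragraph secures.
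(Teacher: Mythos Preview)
Your proof is correct and follows essentially the same route as the paper: contradiction via (D1), then (D3) applied to the three-point chain $(x,x_n,y)$, and finally ($\mathcal{F}_2$) to drive the right-hand side to $-\infty$. Your explicit verification that $D(x,x_n)+D(x_n,y)>0$ (so that $f$ is defined) is a point of care the paper's proof leaves implicit.
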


\begin{proof}
Let $(x,y)\in X\times X$ be such that 
$$
\lim_{n\to +\infty} D(x_n,x)=\lim_{n\to +\infty} D(x_n,y)=0.
$$
Suppose that $x\neq y$, i.e. (from (D1)) $D(x,y)>0$. By (D3), there exists $(f,\alpha)\in \mathcal{F}\times [0,+\infty)$ such that 
$$
f(D(x,y))\leq f(D(x,x_n)+D(x_n,y))+\alpha,\quad \mbox{ for all } n.
$$
On the other hand, using (D2) and ($\mathcal{F}_2$), we have
$$
\lim_{n\to +\infty}f(D(x,x_n)+D(x_n,y))+\alpha
=\lim_{n\to +\infty}f(D(x_n,x)+D(x_n,y))+\alpha=-\infty,
$$
which is a contradiction. Therefore, we have $x=y$.
\end{proof}

\begin{definition}
Let  $(X,D)$ be an $\mathcal{F}$-metric space. Let $\{x_n\}$ be a sequence in $X$.
\begin{itemize}
\item[(i)] We say that $\{x_n\}$ is $\mathcal{F}$-Cauchy, if 
$$
\lim_{n,m\to +\infty} D(x_n,x_m)=0.
$$
\item[(ii)] We say that $(X,D)$ is  $\mathcal{F}$-complete, if every $\mathcal{F}$-Cauchy sequence in $X$ is $\mathcal{F}$-convergent to a certain element in $X$.
\end{itemize}
\end{definition}

\begin{example}
Let $X=\mathbb{N}$, and let $D: X\times X\to [0,+\infty)$ be the mapping defined by \eqref{DDD}. It was shown in Example \ref{EXJ} that $(X,D)$ is an  $\mathcal{F}$-metric space with $f(t)=\frac{-1}{t}$, $t>0$, and $\alpha=1$. We shall prove that $(X,D)$ is a $\mathcal{F}$-complete. Let $\{x_n\}\subset X$ be an $\mathcal{F}$-Cauchy sequence. This means that
$$
\lim_{n,m\to +\infty}D(x_n,x_m)=0.
$$
Therefore, there exists $N\in \mathbb{N}$ such that 
$$
D(x_n,x_m)<\frac{1}{2},\quad n,m\geq N.
$$ 
Suppose that for some $n,m\geq N$, we have $x_n\neq x_m$. By the definition of $D$, and using the above inequality, we obtain
$$
1\leq D(x_n,x_m)=\exp(|x_n-x_m|)<\frac{1}{2},
$$
which is a contradiction. Then, we deduce that 
$$
x_n=x_N,\quad n\geq N,
$$
which implies that 
$$
\lim_{n\to +\infty} D(x_n,x_N)=0,
$$
i.e., $\{x_n\}$ is $\mathcal{F}$-convergent to $x_N$.  As consequence, $(X,D)$ is $\mathcal{F}$-complete.
\end{example}

\begin{proposition}\label{hihi}
Let $(X,D)$ be an $\mathcal{F}$-metric space. If $\{x_n\}\subset X$ is $\mathcal{F}$-convergent, then it is $\mathcal{F}$-Cauchy.
\end{proposition}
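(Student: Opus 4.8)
The plan is to transport the usual metric-space argument through the function $f$, using the three-point chain $(x_n,x,x_m)$ in (D3) together with property ($\mathcal{F}_2$) to convert the smallness of $D(x_n,x)$ into smallness of $D(x_n,x_m)$. This is the same mechanism already exploited in the proof of Proposition \ref{ayham}.

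First I would fix $(f,\alpha)\in\mathcal{F}\times[0,+\infty)$ witnessing that $(X,D)$ is an $\mathcal{F}$-metric, and let $x\in X$ be the limit of the $\mathcal{F}$-convergent sequence $\{x_n\}$, so that $\lim_{n\to+\infty}D(x_n,x)=0$. Fix $\varepsilon>0$. Using ($\mathcal{F}_2$) one gets some $\delta>0$ such that $0<t<\delta\implies f(t)<f(\varepsilon)-\alpha$; indeed, otherwise there would be a sequence $t_k\to 0$ with $f(t_k)\geq f(\varepsilon)-\alpha$ for all $k$, contradicting ($\mathcal{F}_2$). Then I would pick $N\in\mathbb{N}$ with $D(x_n,x)<\delta/2$ for all $n\geq N$, so that, by (D2), $D(x_n,x)+D(x,x_m)<\delta$ whenever $n,m\geq N$.

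Next I would distinguish two cases for $n,m\geq N$. If $D(x_n,x_m)=0$, then trivially $D(x_n,x_m)<\varepsilon$. If $D(x_n,x_m)>0$, then by (D1) the sum $D(x_n,x)+D(x,x_m)$ is also positive (otherwise $x_n=x=x_m$ and $D(x_n,x_m)=0$), so applying (D3) to the chain $(u_1,u_2,u_3)=(x_n,x,x_m)$ and then ($\mathcal{F}_1$) and the choice of $\delta$ gives $f(D(x_n,x_m))\leq f(D(x_n,x)+D(x,x_m))+\alpha<(f(\varepsilon)-\alpha)+\alpha=f(\varepsilon)$; since $f$ is non-decreasing, $D(x_n,x_m)\geq\varepsilon$ would force $f(D(x_n,x_m))\geq f(\varepsilon)$, a contradiction, hence $D(x_n,x_m)<\varepsilon$. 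In either case $D(x_n,x_m)<\varepsilon$ for all $n,m\geq N$, which is precisely $\lim_{n,m\to+\infty}D(x_n,x_m)=0$, i.e. $\{x_n\}$ is $\mathcal{F}$-Cauchy.

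I do not expect a genuine obstacle here; the only points requiring a little care are the two degeneracy checks — that $D(x_n,x_m)$ may vanish (handled separately) and that the chain sum $D(x_n,x)+D(x,x_m)$ must be shown strictly positive before (D3) can be invoked — and both are immediate consequences of (D1). Everything else is the standard triangle-inequality argument read through $f$.
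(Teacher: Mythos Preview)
Your proof is correct and follows essentially the same route as the paper: fix $(f,\alpha)$, use ($\mathcal{F}_2$) to get a $\delta>0$ with $f(t)<f(\varepsilon)-\alpha$ for $0<t<\delta$, make $D(x_n,x)+D(x,x_m)<\delta$ for large $n,m$, then apply (D3) to the chain $(x_n,x,x_m)$ and ($\mathcal{F}_1$) to conclude $D(x_n,x_m)<\varepsilon$. The only cosmetic differences are that the paper splits cases on $x_n=x_m$ versus $x_n\neq x_m$ (equivalent to your split by (D1)) and bounds the sum directly by $\delta$ rather than via $\delta/2$.
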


\begin{proof}
Let $(f,\alpha)\in \mathcal{F}\times [0,+\infty)$ be such that (D3) is satisfied. Let $x\in X$ be such that 
\begin{equation}\label{J1}
\lim_{n\to +\infty}D(x_n,x)=0.
\end{equation}
Let $\varepsilon>0$ be fixed. By ($\mathcal{F}_2$), we know that there exists some $\delta>0$ such that 
\begin{equation}\label{J2}
0<t<\delta \implies f(t)<f(\varepsilon)-\alpha.
\end{equation}
On the other hand, by \eqref{J1}, there exists some $N\in \mathbb{N}$ such that 
\begin{equation}\label{J3}
D(x_n,x)+D(x_m,x)<\delta,\quad n,m\geq N.
\end{equation}
Let $n,m\geq N$. We discuss two cases.\\
Case 1: If $x_m=x_n$. In this case, by (D1), we have
$$
D(x_n,x_m)=0<\varepsilon.
$$ 
Case 2: If $x_m\neq x_n$. In this case, from \eqref{J3}, we have
$$
0<D(x_n,x)+D(x_m,x)<\delta.
$$
Therefore, by \eqref{J2}, we have
$$
f(D(x_n,x)+D(x_m,x))<f(\varepsilon)-\alpha.
$$
Now, using (D3), we obtain
$$
f(D(x_n,x_m))\leq f(D(x_n,x)+D(x_m,x))+\alpha<f(\varepsilon),
$$
which implies from ($\mathcal{F}_1$) that 
$$
D(x_n,x_m)<\varepsilon.
$$
As consequence, we have
$$
D(x_n,x_m)<\varepsilon,\quad n,m\geq N,
$$
which yields
$$
\lim_{n,m\to +\infty}D(x_n,x_m)=0,
$$
i.e. $\{x_n\}$ is  $\mathcal{F}$-Cauchy.
\end{proof}

Next, we discuss the compactness on $\mathcal{F}$-metric spaces.

\begin{definition}
Let $(X,D)$ be an $\mathcal{F}$-metric space. Let $A$ be a nonempty subset of $X$. We say that $A$ is $\mathcal{F}$-compact if $A$ is compact with respect to the topology $\tau_{\mathcal{F}}$ on $X$.
\end{definition}

\begin{proposition}\label{PRK}
Let $(X,D)$ be an $\mathcal{F}$-metric space. Let $A$ be a nonempty subset of $X$. Then, the following statements are equivalent:
\begin{itemize}
\item[(i)] $A$ is $\mathcal{F}$-compact.
\item[(ii)] For any sequence $\{x_n\}\subset A$, there exist a subsequence $\{x_{n_k}\}$ of $\{x_n\}$ and $x\in A$  such that 
$$
\lim_{k\to +\infty} D(x_{n_k},x)=0.
$$
\end{itemize}
\end{proposition}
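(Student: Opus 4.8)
The plan is to reproduce the classical equivalence between compactness and sequential compactness familiar from metric spaces, keeping in mind that in $\tau_{\mathcal{F}}$ a ball $B(x,r)$ need not be $\mathcal{F}$-open, so the customary ``base of balls'' and second-countability shortcuts are unavailable and one must invoke (D3) and $(\mathcal{F}_1)$--$(\mathcal{F}_2)$ directly. For (i)$\implies$(ii) I would build an $\mathcal{F}$-open cover of $A$ from complements of closures of tails of the sequence; for (ii)$\implies$(i) I would prove $\mathcal{F}$-metric analogues of total boundedness and of the Lebesgue covering lemma and combine them.

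\emph{Proof of (i)$\implies$(ii).} Arguing by contradiction, assume $A$ is $\mathcal{F}$-compact while some sequence $\{x_n\}\subset A$ has no subsequence $D$-converging to a point of $A$. Fix $x\in A$. If $x\in\overline{\{x_n:n\geq N\}}$ held for every $N\in\mathbb{N}$, then Proposition~\ref{ayham} together with (D2) would let us pick inductively a strictly increasing sequence $(n_k)$ with $D(x_{n_k},x)<1/k$, that is, a subsequence $D$-converging to $x\in A$, a contradiction. Hence there is $N_x\in\mathbb{N}$ with $x\notin\overline{\{x_n:n\geq N_x\}}$, and the set $\mathcal{O}_x:=X\setminus\overline{\{x_n:n\geq N_x\}}$ is $\mathcal{F}$-open, contains $x$, and misses $\{x_n:n\geq N_x\}$. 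Thus $\{\mathcal{O}_x:x\in A\}$ is an $\mathcal{F}$-open cover of $A$; by $\mathcal{F}$-compactness there are $y_1,\dots,y_p\in A$ with $A\subset\mathcal{O}_{y_1}\cup\dots\cup\mathcal{O}_{y_p}$. Putting $M=\max_{1\leq j\leq p}N_{y_j}$, no term $x_n$ with $n\geq M$ lies in this union, although $x_n\in A$ — a contradiction.

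\emph{Proof of (ii)$\implies$(i).} Assume $A$ satisfies (ii). First I would show that $A$ is totally bounded: for each $r>0$ there are finitely many $a_1,\dots,a_m\in A$ with $A\subset\bigcup_{i=1}^m B(a_i,r)$. Otherwise one constructs inductively $a_n\in A\setminus\bigcup_{i<n}B(a_i,r)$, so $D(a_i,a_j)\geq r$ whenever $i\neq j$; but by (ii) some subsequence $\{a_{n_k}\}$ is $\mathcal{F}$-convergent, hence $\mathcal{F}$-Cauchy by Proposition~\ref{hihi}, contradicting $D(a_{n_k},a_{n_l})\geq r$ for $k\neq l$. Next I would prove a Lebesgue-number property: given any $\mathcal{F}$-open cover $\mathcal{U}$ of $A$, there is $\lambda>0$ such that every $a\in A$ satisfies $B(a,\lambda)\subset U$ for some $U\in\mathcal{U}$. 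If this fails, for each $n\geq 1$ choose $a_n\in A$ with $B(a_n,1/n)$ contained in no member of $\mathcal{U}$; by (ii), after passing to a subsequence, $D(a_{n_k},a)\to 0$ for some $a\in A$; pick $U_0\in\mathcal{U}$ with $a\in U_0$ and, since $U_0$ is $\mathcal{F}$-open, $r>0$ with $B(a,r)\subset U_0$. Since $D(a,a_{n_k})+1/n_k\to 0$, $(\mathcal{F}_2)$ gives $f(D(a,a_{n_k})+1/n_k)\to-\infty$, so $f(D(a,a_{n_k})+1/n_k)+\alpha<f(r)$ for all large $k$; then for any $y\in B(a_{n_k},1/n_k)$ with $D(a,y)>0$, applying (D3) with $N=3$ to the points $a,a_{n_k},y$ and using $(\mathcal{F}_1)$ yields $f(D(a,y))\leq f(D(a,a_{n_k})+1/n_k)+\alpha<f(r)$, whence $D(a,y)<r$ (the case $D(a,y)=0$ being trivial). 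Therefore $B(a_{n_k},1/n_k)\subset B(a,r)\subset U_0$ for all large $k$, contradicting the choice of $a_{n_k}$. With both facts in hand, given an $\mathcal{F}$-open cover $\mathcal{U}$ of $A$, take $\lambda$ as above and, by total boundedness, $a_1,\dots,a_m\in A$ with $A\subset\bigcup_{i=1}^m B(a_i,\lambda)$; choosing $U_i\in\mathcal{U}$ with $B(a_i,\lambda)\subset U_i$ gives the finite subcover $A\subset\bigcup_{i=1}^m U_i$, so $A$ is $\mathcal{F}$-compact.

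The step I expect to be the main obstacle is the Lebesgue-number property in the $\mathcal{F}$-setting. Because the additive constant $\alpha$ in (D3) destroys the usual inclusion between concentric balls, a ball need not be $\mathcal{F}$-open, so the classical proof via a countable base of balls is unavailable; instead the inclusion $B(a_{n_k},1/n_k)\subset U_0$ has to be squeezed directly out of (D3) by driving $D(a,a_{n_k})+1/n_k$ to zero and invoking $(\mathcal{F}_2)$. The rest is bookkeeping — tracking the passage to subsequences and disposing of the $D=0$ special cases.
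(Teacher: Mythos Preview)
Your proof is correct and essentially mirrors the paper's: for (ii)$\implies$(i) the two arguments are identical (total boundedness plus a Lebesgue-number lemma squeezed out of (D3) and $(\mathcal{F}_2)$, then combined), and for (i)$\implies$(ii) you run the dual open-cover argument with complements of tail closures where the paper instead invokes the finite-intersection property of the decreasing family $\{\overline{\{x_m:m\ge n\}}\}_n$, both extracting the convergent subsequence via Proposition~\ref{ayham}. The difference is purely stylistic.
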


\begin{proof}
Suppose that $A$ is $\mathcal{F}$-compact. It can be easy seen  that any decreasing sequence of nonempty $\mathcal{F}$-closed subsets of $A$ have a nonempty intersection. Let $\{x_n\}$ be a sequence in $A$. For every $n\in \mathbb{N}$, let 
$$
C_n=\{x_m:\, m\geq n\}.
$$
Observe that $C_{n+1}\subset C_n$, for every $n\in \mathbb{N}$, which yields $\displaystyle \{\overline{C_n}\}_{n\in \mathbb{N}}$ is a decreasing sequence of nonempty $\mathcal{F}$-closed subsets of $A$. Therefore, there is some $x$ that belongs to $\displaystyle \bigcap_{n\in \mathbb{N}} \overline{C_n}$. Next, let $\varepsilon>0$ be fixed. Since $x\in \overline{C_0}$, by Proposition \ref{ayham}, there exist    $n_0\geq 0$ and $x_{n_0}\in A$ such that $D(x_{n_0},x)<\varepsilon$. continuing this process, for any $k\in \mathbb{N}$, there exist $n_k\geq k$ and  $x_{n_k}\in A$ such that $D(x_{n_k},x)<\varepsilon$. Therefore, we have 
$$
\lim_{k\to +\infty} D(x_{n_k},x)=0.
$$
On the other hand, since $A$ is $\mathcal{F}$-compact,  then it is $\mathcal{F}$-closed, and $x\in A$. Then we proved that (i)$\implies$(ii). Conversely, suppose that (ii) is satisfied.  Let $(f,\alpha)\in \mathcal{F}\times [0,+\infty)$ be such that (D3) is satisfied. First, we claim that 
\begin{equation}\label{claim1}
\forall\, r>0,\, \exists\, (x_i)_{i=1,\cdots,n}\subset A:\, 
A\subset \bigcup_{i=1,\cdots,n}B(x_i,r).
\end{equation}
We argue by contradiction, by supposing that there exists $r>0$ such that for any finite number of elements $(x_i)_{i=1,\cdots,n}\subset A$, we have 
$$
A\not\subset \bigcup_{i=1,\cdots,n}B(x_i,r).
$$ 
Let $x_1\in A$ be an arbitrary element. Then
$$
A\not\subset B(x_1,r),
$$
i.e. there exists $x_2\in A$ such that 
$$
D(x_1,x_2)\geq r.
$$ 
Again, we have
$$
A\not\subset B(x_1,r)\cup B(x_2,r),
$$
i.e.
there exists $x_3\in A$ such that 
$$
D(x_i,x_3)\geq r,\quad i=1,2.
$$
Continuing this process, by induction, we can construct a sequence $\{x_n\}\subset A$ such that 
$$
D(x_n,x_m)\geq r,\quad n,m\in \mathbb{N}^*.
$$
Observe that in this case, it is not possible to extract from $\{x_n\}$ any $\mathcal{F}$-Cauchy subsequence, so (from Proposition \ref{hihi}), any  $\mathcal{F}$-convergent subsequence. Then we obtain a contradiction with (ii), which proves \eqref{claim1}. Next, let $\{\mathcal{O}_i\}_{i\in I}$ be an arbitrary family of $\mathcal{F}$-open subsets of $X$ such that 
\begin{equation}\label{cov}
A\subset \bigcup_{i\in I} \mathcal{O}_i.
\end{equation}
We claim that
\begin{equation}\label{claim2}
\exists\, r_0>0:\, \forall\, x\in A,\, \exists\,i\in I:\, B(x,r_0)\subset \mathcal{O}_i.
\end{equation}
We argue by contradiction by supposing that for any $r>0$, there exists $x_r\in A$ such that $B(x_r,r)\not\subset \mathcal{O}_i$, for all $i\in I$. In particular, for all $n\in \mathbb{N}^*$, there exists $x_n \in A$ such that $B\left(x_n, \frac{1}{n}\right)\not\subset\mathcal{O}_i$, for all $i\in I$. By (ii), we can extract a subsequence $\{x_{n_k}\}$ from $\{x_n\}$ such that 
\begin{equation}\label{nej}
\lim_{k\to +\infty} D(x_{n_k},x)=0,
\end{equation}
for a certain $x\in A$. On the other hand, by \eqref{cov}, there exists some $j\in I$ such that $x\in \mathcal{O}_j$.
Since $\mathcal{O}_j$ is an $\mathcal{F}$-open subset of $X$, there exists some $r_0>0$ such that $B(x,r_0)\subset \mathcal{O}_j$. Next, for any $n_k\in \mathbb{N}^*$, and for any $z\in B\left(x_{n_k},\frac{1}{n_k}\right)$, we have 
$$
D(x,z)>0\implies f(D(x,z))\leq f(D(x,x_{n_k})+D(x_{n_k},z))+\alpha<f\left(D(x,x_{n_k})+\frac{1}{n_k}\right)+\alpha.
$$
By \eqref{nej} and ($\mathcal{F}_2$), there exists $K\in \mathbb{N}^*$ such that
$$
f\left(D(x,x_{n_k})+\frac{1}{n_k}\right)<f(r_0)-\alpha,\quad k\geq K,
$$
which yields
$$
D(x,z)>0\implies f(D(x,z))<f(r_0).
$$
Therefore, by ($\mathcal{F}_1$), we obtain
$$
D(x,z)<r_0.
$$
Thus we have 
$$
B\left(x_{n_k},\frac{1}{n_k}\right)\subset B(x,r_0),\quad n_k\in \mathbb{N}^*,
$$
which implies
$$
B\left(x_{n_k},\frac{1}{n_k}\right)\subset \mathcal{O}_j,\quad n_k\in \mathbb{N}^*.
$$
Observe that we obtain a contradiction with the fact that 
$B\left(x_n, \frac{1}{n}\right)\not\subset\mathcal{O}_i$, for all $i\in I$. Then \eqref{claim2} holds.  Further, by \eqref{claim1}, there exists $\displaystyle (x_p)_{p=1,\cdots,n} \subset A$ such that 
$$
A\subset \bigcup_{p=1,\cdots,n} B(x_p,r_0).
$$
But by \eqref{claim2}, for any $p=1,\cdots,n$, there exists 
$i(p)\in I$ such that $B(x_p,r_0)\subset \mathcal{O}_{i(p)}$, which yields
$$
A\subset \bigcup_{p=1,\cdots,n}\mathcal{O}_{i(p)}.
$$
Therefore, $A$ is $\mathcal{F}$-compact, and (ii)$\implies$(i).
\end{proof}

\begin{definition}
Let $(X,D)$ be an $\mathcal{F}$-metric space. Let $A$ be a nonempty subset of $X$. The subset $A$ is called sequentially $\mathcal{F}$-compact, if  for any sequence $\{x_n\}\subset A$, there exist a subsequence $\{x_{n_k}\}$ of $\{x_n\}$ and $x\in A$  such that 
$$
\lim_{k\to +\infty} D(x_{n_k},x)=0.
$$
\end{definition}

\begin{definition}
Let $(X,D)$ be an $\mathcal{F}$-metric space. Let $A$ be a nonempty subset of $X$. The subset $A$ is called  $\mathcal{F}$-totally bounded, if  
$$
\forall\, r>0,\, \exists\, (x_i)_{i=1,\cdots,n}\subset A:\, 
A\subset \bigcup_{i=1,\cdots,n}B(x_i,r).
$$
\end{definition}

From the proof of Proposition \ref{PRK}, we deduce the following result.

\begin{proposition}
Let $(X,D)$ be an $\mathcal{F}$-metric space. Let $A$ be a nonempty subset of $X$.  Then
\begin{itemize}
\item[(i)] $A$ is $\mathcal{F}$-compact if and only if $A$ is 
sequentially $\mathcal{F}$-compact.
\item[(ii)] If $A$ if $\mathcal{F}$-compact, then $A$ is $\mathcal{F}$-totally bounded.
\end{itemize}
\end{proposition}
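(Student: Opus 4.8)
The plan is to read off both parts directly from Proposition \ref{PRK} and its proof, since no genuinely new argument is needed.

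For part (i), I would simply observe that the condition defining sequential $\mathcal{F}$-compactness of $A$ is verbatim statement (ii) of Proposition \ref{PRK}, while $\mathcal{F}$-compactness of $A$ is statement (i) of that same proposition. Hence the equivalence (i)$\Longleftrightarrow$(ii) already established there is exactly the assertion that $A$ is $\mathcal{F}$-compact if and only if $A$ is sequentially $\mathcal{F}$-compact; part (i) therefore requires only quoting Proposition \ref{PRK}.

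For part (ii), suppose $A$ is $\mathcal{F}$-compact. By part (i), $A$ is sequentially $\mathcal{F}$-compact, i.e.\ every sequence in $A$ has a subsequence $\mathcal{F}$-convergent to a point of $A$. Now I would recall the first step of the proof of the implication (ii)$\implies$(i) in Proposition \ref{PRK}: there, using only this sequential property together with Proposition \ref{hihi} (every $\mathcal{F}$-convergent sequence is $\mathcal{F}$-Cauchy), the claim \eqref{claim1} was proved by contradiction, namely that for every $r>0$ there exist finitely many points $x_1,\dots,x_n\in A$ with $A\subset\bigcup_{i=1,\cdots,n}B(x_i,r)$. This is precisely the definition of $A$ being $\mathcal{F}$-totally bounded, so part (ii) follows.

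There is essentially no obstacle here: the only point that must be checked is that the derivation of \eqref{claim1} inside the proof of Proposition \ref{PRK} invokes nothing about $A$ beyond hypothesis (ii) of that proposition (sequential compactness) and Proposition \ref{hihi} — in particular not the open-cover definition of compactness — which is indeed the case, so the argument transfers unchanged. Equivalently, one may note that \eqref{claim1} is literally the statement ``$A$ is $\mathcal{F}$-totally bounded'' already established within Proposition \ref{PRK}, making (ii) an immediate corollary of that proof.
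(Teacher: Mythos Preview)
Your proposal is correct and matches the paper's own treatment exactly: the paper simply states ``From the proof of Proposition \ref{PRK}, we deduce the following result'' without giving a separate argument, and your explanation spells out precisely how each part follows---(i) by identifying sequential $\mathcal{F}$-compactness with condition (ii) of Proposition \ref{PRK}, and (ii) by observing that \eqref{claim1} in that proof is literally $\mathcal{F}$-total boundedness and was derived using only sequential compactness plus Proposition \ref{hihi}.
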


\section{Banach contraction principle on $\mathcal{F}$-metric spaces}\label{sec5}

In this section, we establish a new version of Banach contraction principle on the setting of $\mathcal{F}$-metric spaces.

\begin{theorem}\label{TB}
Let $(X,D)$ be an $\mathcal{F}$-metric space, and let $g: X\to X$ be a given mapping. Suppose that the following conditions are satisfied:
\begin{itemize}
\item[(i)] $(X,D)$ is $\mathcal{F}$-complete.
\item[(ii)] There exists $k\in (0,1)$ such that 
$$
D(g(x),g(y))\leq k D(x,y),\quad (x,y)\in X\times X.
$$
\end{itemize}
Then $g$ has a unique fixed point $x^*\in X$. Moreover, for any $x_0\in X$, the sequence $\{x_n\}\subset X$ defined by
\begin{equation}\label{it}
x_{n+1}=g(x_n),\quad n\in \mathbb{N},
\end{equation}
is $\mathcal{F}$-convergent to $x^*$.
\end{theorem}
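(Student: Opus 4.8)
The plan is to run the classical Picard iteration argument, replacing every use of the triangle inequality by an appeal to (D3) together with the structural properties ($\mathcal{F}_1$)--($\mathcal{F}_2$) of a fixed pair $(f,\alpha)\in\mathcal{F}\times[0,+\infty)$ for which $D$ satisfies (D3). Fix $x_0\in X$ and let $\{x_n\}$ be given by \eqref{it}. If $D(x_n,x_{n+1})=0$ for some $n$, then $x_n$ is already a fixed point and the sequence is eventually constant, so the conclusions are immediate; hence I may assume $D(x_n,x_{n+1})>0$ for every $n$. Iterating the contraction estimate (ii) gives $D(x_n,x_{n+1})\le k^n D(x_0,x_1)$ for all $n\in\mathbb{N}$.

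The heart of the proof is to show that $\{x_n\}$ is $\mathcal{F}$-Cauchy. For $m>n$, I apply (D3) to the chain $(x_n,x_{n+1},\dots,x_m)$: since $D(x_n,x_m)>0$ (the case $D(x_n,x_m)=0$ being trivial) and $f$ is non-decreasing,
$$
f\big(D(x_n,x_m)\big)\le f\!\left(\sum_{i=n}^{m-1}D(x_i,x_{i+1})\right)+\alpha\le f\!\left(\sum_{i=n}^{+\infty}k^iD(x_0,x_1)\right)+\alpha=f\!\left(\frac{k^n}{1-k}\,D(x_0,x_1)\right)+\alpha,
$$
where I used ($\mathcal{F}_1$) and summability of the geometric series. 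Now fix $\varepsilon>0$. By ($\mathcal{F}_2$) there is $\delta>0$ with $0<t<\delta\implies f(t)<f(\varepsilon)-\alpha$, and since $\frac{k^n}{1-k}D(x_0,x_1)\to 0$ I may pick $N$ with $\frac{k^N}{1-k}D(x_0,x_1)<\delta$. Then for all $m>n\ge N$ the displayed inequality gives $f(D(x_n,x_m))<f(\varepsilon)$, hence $D(x_n,x_m)<\varepsilon$ by ($\mathcal{F}_1$). This proves $\lim_{n,m\to+\infty}D(x_n,x_m)=0$. Observe that the bound $\frac{k^n}{1-k}D(x_0,x_1)$ does not depend on $m$, which is exactly what lets a single threshold $N$ work uniformly.

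By $\mathcal{F}$-completeness, $\{x_n\}$ is $\mathcal{F}$-convergent to some $x^*\in X$, i.e. $D(x_n,x^*)\to 0$; this already yields the last assertion of the theorem. To see that $x^*$ is a fixed point, apply (ii): $D(x_{n+1},g(x^*))=D(g(x_n),g(x^*))\le k\,D(x_n,x^*)\to 0$, so $\{x_{n+1}\}$ is $\mathcal{F}$-convergent to $g(x^*)$; but it is also $\mathcal{F}$-convergent to $x^*$, and by the uniqueness of the $\mathcal{F}$-limit established above we conclude $g(x^*)=x^*$. Finally, for uniqueness, if $y^*$ is another fixed point with $y^*\ne x^*$, then $D(x^*,y^*)>0$ by (D1) and (ii) gives $D(x^*,y^*)=D(g(x^*),g(y^*))\le k\,D(x^*,y^*)<D(x^*,y^*)$, a contradiction.

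The only genuinely non-classical point — and the step I expect to demand the most care — is the Cauchy argument: one cannot telescope $D(x_n,x_m)$ through a triangle inequality, so the entire chain $x_n,\dots,x_m$ must be fed into (D3) at once, and the desired decay of $D(x_n,x_m)$ must be recovered from the $f$-level estimate by way of the equivalence in ($\mathcal{F}_2$) and the monotonicity ($\mathcal{F}_1$). Everything else is formally identical to the metric-space proof.
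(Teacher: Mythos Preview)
Your proof is correct and follows essentially the same route as the paper's: the Cauchy argument via (D3) applied to the whole chain $x_n,\dots,x_m$, the use of ($\mathcal{F}_2$) to manufacture a $\delta$ with $f(t)<f(\varepsilon)-\alpha$, and the uniqueness argument are identical. The only minor difference is in verifying $g(x^*)=x^*$: you pass through uniqueness of $\mathcal{F}$-limits (showing $x_{n+1}\to g(x^*)$ and $x_{n+1}\to x^*$), whereas the paper argues directly by contradiction using (D3) on the chain $g(x^*),g(x_n),x^*$ together with ($\mathcal{F}_2$); both are straightforward and equally valid.
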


\begin{proof}
First, observe that $g$ has at most one fixed point. Indeed, if $(u,v)\in X\times X$ are two fixed points of $g$ with $u\neq v$, i.e.
$$
D(u,v)>0,\,\, g(u)=u,\,\, g(v)=v,
$$
then from (ii), we have
$$
D(u,v)=D(g(u),g(v))\leq k D(u,v)<D(u,v),
$$
which is a contradiction.

Next, let $(f,\alpha)\in \mathcal{F}\times [0,+\infty)$ be such that (D3) is satisfied. Let $\varepsilon>0$ be fixed.
By ($\mathcal{F}_2$), there exists $\delta>0$ such that 
\begin{equation}\label{dpu}
0<t<\delta \implies f(t)<f(\varepsilon)-\alpha.
\end{equation}
Let $x_0\in X$ be an arbitrary element.  Let $\{x_n\}\subset X$ be the sequence defined by \eqref{it}.  Without restriction of the generality, we may suppose that $D(x_0,x_1)>0$. Otherwise, $x_0$ will be a fixed point of $g$. It can be easily seen that from (ii), we have
$$
D(x_n,x_{n+1})\leq k^n D(x_0,x_1),\quad n\in \mathbb{N},
$$
which yields
$$
\sum_{i=n}^{m-1} D(x_i,x_{i+1})\leq  \frac{k^n}{1-k} D(x_0,x_1),\quad m>n.
$$ 
Since
$$
\lim_{n\to +\infty}\frac{k^n}{1-k} D(x_0,x_1)=0,
$$
there exists some $N\in \mathbb{N}$ such that 
\begin{equation}\label{oo}
0<\frac{k^n}{1-k} D(x_0,x_1)<\delta,\quad n\geq N.
\end{equation}
Hence, by\eqref{dpu} and $(\mathcal{F}_1$), we have
\begin{equation}\label{DB}
f\left(\sum_{i=n}^{m-1} D(x_i,x_{i+1})\right)\leq f\left(\frac{k^n}{1-k} D(x_0,x_1)\right)<f(\varepsilon)-\alpha,\quad m>n\geq N.
\end{equation}
Using (D3) and \eqref{DB}, we obtain
$$
D(x_n,x_m)>0,\, m>n\geq N \implies f(D(x_n,x_m))\leq f\left(\sum_{i=n}^{m-1} D(x_i,x_{i+1})\right)+\alpha<f(\varepsilon),
$$
which implies by  ($\mathcal{F}_1$) that
$$
D(x_n,x_m)<\varepsilon,\quad m>n\geq N.
$$
This proves that $\{x_n\}$ is $\mathcal{F}$-Cauchy. Since $(X,D)$ is $\mathcal{F}$-complete, there exists $x^*\in X$ such that $\{x_n\}$ is $\mathcal{F}$-convergent to $x^*$, i.e.
\begin{equation}\label{mur}
\lim_{n\to +\infty} D(x_n,x^*)=0.
\end{equation}
We shall prove that $x^*$ is a fixed point of $g$. We argue by contradiction by supposing that $D(g(x^*),x^*)>0$. By (D3), we have
$$
f(D(g(x^*),x^*))\leq f(D(g(x^*),g(x_n))+D(g(x_n),x^*))+\alpha,\quad n\in \mathbb{N}.
$$
Using (ii) and ($\mathcal{F}_1$), we obtain
$$
f(D(g(x^*),x^*))\leq f(kD(x^*,x_n)+D(x_{n+1},x^*))+\alpha,\quad n\in \mathbb{N}.
$$
On the other hand, using ($\mathcal{F}_2$) and \eqref{mur}, we have
$$
\lim_{n\to +\infty} f(kD(x^*,x_n)+D(x_{n+1},x^*))+\alpha=-\infty,
$$
which is a contradiction. Therefore, we have $D(g(x^*),x^*)=0$, i.e. $g(x^*)=x^*$. As consequence, $x^*\in X$ is the unique fixed point of $g$.
\end{proof}

\begin{corollary}
Let $(X,D)$ be an  $\mathcal{F}$-metric space, and $(f,\alpha)\in \mathcal{F}\times [0,+\infty)$ be such that (D3) is satisfied. Let $g: B(x_0,r)\to X$ be a given mapping, where $x_0\in X$ and $r>0$. Suppose that the following conditions are satisfied:
\begin{itemize}
\item[(i)] \eqref{JAL} is satisfied.
\item[(ii)] $(X,D)$ is $\mathcal{F}$-complete.
\item[(iii)] There exists $k\in (0,1)$ such that 
$$
D(g(x),g(y))\leq k D(x,y),\quad (x,y)\in B(x_0,r)\times B(x_0,r).
$$
\item[(iv)] There exists $0<\varepsilon<r$ such that 
$$
f\left(k \varepsilon+D(x_0,g(x_0))\right)\leq f(\varepsilon)-\alpha.
$$
\end{itemize}
Then $g$ has a fixed point.
\end{corollary}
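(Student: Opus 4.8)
The plan is to deduce the corollary from the global Banach contraction principle (Theorem \ref{TB}) by applying it on the closed ball $Y:=\mathbf{B}(x_0,\varepsilon)$. Three ingredients are essentially routine. First, since \eqref{JAL} holds by (i), Proposition \ref{CL} shows that $Y$ is $\mathcal{F}$-closed. Second, $(Y,D)$ is itself an $\mathcal{F}$-metric space (just restrict (D1), (D2), (D3)), and it is $\mathcal{F}$-complete: an $\mathcal{F}$-Cauchy sequence in $Y$ is $\mathcal{F}$-Cauchy in $X$, hence $\mathcal{F}$-convergent in $X$ by (ii), and its limit lies in $Y$ by Proposition \ref{IH}; since $\mathcal{F}$-convergence is characterised by $D(x_n,x)\to 0$, the sequence $\mathcal{F}$-converges inside $Y$ as well. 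Third, since $\varepsilon<r$ we have $Y\subset B(x_0,r)$, so $g$ is defined on $Y$, $x_0\in Y$, and by (iii) the restriction $g|_Y$ satisfies $D(g(x),g(y))\le kD(x,y)$ for all $x,y\in Y$.

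The main step is to verify that $g$ maps $Y$ into itself. Fix $x\in Y$, so $D(x_0,x)\le\varepsilon$. If $D(x_0,g(x))=0$ then $g(x)=x_0\in Y$. Otherwise, apply (D3) to the chain $u_1=x_0,\ u_2=g(x_0),\ u_3=g(x)$:
\[
f\big(D(x_0,g(x))\big)\le f\big(D(x_0,g(x_0))+D(g(x_0),g(x))\big)+\alpha .
\]
By (iii), since $x_0,x\in B(x_0,r)$, we get $D(g(x_0),g(x))\le kD(x_0,x)\le k\varepsilon$, hence by ($\mathcal{F}_1$)
\[
f\big(D(x_0,g(x_0))+D(g(x_0),g(x))\big)\le f\big(k\varepsilon+D(x_0,g(x_0))\big),
\]
and then hypothesis (iv) yields $f\big(D(x_0,g(x))\big)\le f\big(k\varepsilon+D(x_0,g(x_0))\big)+\alpha\le f(\varepsilon)$. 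Using the monotonicity ($\mathcal{F}_1$) of $f$ once more, this gives $D(x_0,g(x))\le\varepsilon$, i.e. $g(x)\in Y$.

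Once $g(Y)\subset Y$ is known, $(Y,D)$ is an $\mathcal{F}$-complete $\mathcal{F}$-metric space and $g|_Y:Y\to Y$ satisfies hypotheses (i) and (ii) of Theorem \ref{TB} with the same constant $k\in(0,1)$; that theorem then produces $x^*\in Y$ with $g(x^*)=x^*$, which is the required fixed point of $g$ (indeed the unique fixed point of $g$ lying in $Y$). I expect the only real obstacle to be the invariance $g(Y)\subset Y$, which is exactly where (iv) is used; in particular the last line of that step deserves a little care, since ($\mathcal{F}_1$) gives only non-strict monotonicity, so the passage from $f(D(x_0,g(x)))\le f(\varepsilon)$ back to $D(x_0,g(x))\le\varepsilon$ should be made explicit (for instance by first producing a strict inequality, as was done with ($\mathcal{F}_2$) in the proofs of Proposition \ref{ayham} and Theorem \ref{TB}). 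The remaining ingredients — $\mathcal{F}$-closedness of $Y$, $\mathcal{F}$-completeness of the closed subspace, and the contraction estimate on $Y$ — follow directly from the results already established.
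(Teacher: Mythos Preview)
Your proposal is correct and follows essentially the same route as the paper: show $g(\mathbf{B}(x_0,\varepsilon))\subset\mathbf{B}(x_0,\varepsilon)$ via (D3), (iii), (iv) and ($\mathcal{F}_1$), observe that $\mathbf{B}(x_0,\varepsilon)$ is $\mathcal{F}$-closed by Proposition~\ref{CL} and hence $\mathcal{F}$-complete, and then apply Theorem~\ref{TB} to $g|_{\mathbf{B}(x_0,\varepsilon)}$. Your caveat about passing from $f(D(x_0,g(x)))\le f(\varepsilon)$ back to $D(x_0,g(x))\le\varepsilon$ under mere non-decreasingness of $f$ is well spotted; the paper makes exactly the same step without further comment.
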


\begin{proof}
Let $0<\varepsilon<r$ be such that (iv) is satisfied.  
First, we shall prove that 
\begin{equation}\label{st}
g(\mathbf{B}(x_0,\varepsilon))\subset \mathbf{B}(x_0,\varepsilon).
\end{equation}
Let $x\in \mathbf{B}(x_0,\varepsilon)$, i.e.
$$
D(x_0,x)\leq \varepsilon.
$$
Suppose that $D(g(x),x_0)>0$. By (D3), we have
$$
f(D(g(x),x_0))\leq f(D(g(x),g(x_0))+D(g(x_0),x_0))+\alpha.
$$
Using ($\mathcal{F}_1$), (iii), and (iv), we obtain
\begin{eqnarray*}
f(D(g(x),x_0)) &\leq & f(k D(x,x_0)+D(g(x_0),x_0))+\alpha\\
&\leq & f(k\varepsilon +D(g(x_0),x_0))+\alpha\\
&\leq & f(\varepsilon).
\end{eqnarray*}
Hence, by ($\mathcal{F}_1$), we have $D(g(x),x_0)\leq \varepsilon$, which yields $g(x)\in \mathbf{B}(x_0,\varepsilon)$. Therefore, we proved \eqref{st}. Further, the mapping $g: \mathbf{B}(x_0,\varepsilon)\to \mathbf{B}(x_0,\varepsilon)$ is well-defined, and satisfies the Banach contraction. On the other hand, since \eqref{JAL} is satisfied, by Proposition \ref{CL}, we know that $\mathbf{B}(x_0,\varepsilon)$  is $\mathcal{F}$-closed, so from (i), it is $\mathcal{F}$-complete. Finally the result follows from Theorem \ref{TB}.
\end{proof}

\vspace{1cm}

\noindent {\bf Acknowledgements.} The second author extends his appreciation to Distinguished Scientist Fellowship Program (DSFP) at King Saud University (Saudi Arabia).

\vspace{1cm}

\noindent Mohamed Jleli\\
Department of Mathematics, College of Science, King Saud University, Riyadh 11451, Saudi Arabia.\\
E-mail: jleli@ksu.edu.sa\\

\noindent Bessem Samet\\
Department of Mathematics, College of Science, King Saud University, Riyadh 11451, Saudi Arabia.\\
E-mail: bsamet@ksu.edu.sa

\end{document}